\providecommand*{\shuffle}{%
  \mathbin{\mathpalette\shuffle@{}}%
}
\newcommand*{\shuffle@}[2]{%
  \sbox0{$#1\vcenter{}$}%
  \kern .15\ht0 
  \rlap{\vrule height .25\ht0 depth 0pt width 2.5\ht0}%
  \raise.1\ht0\hbox to 2.5\ht0{%
    \vrule height 1.75\ht0 depth -.1\ht0 width .17\ht0 %
    \hfill
    \vrule height 1.75\ht0 depth -.1\ht0 width .17\ht0 %
    \hfill
    \vrule height 1.75\ht0 depth -.1\ht0 width .17\ht0 %
  }%
  \kern .15\ht0 
}
\newtheorem{theorem}{Theorem}[section]
\newtheorem{lemma}[theorem]{Lemma}
\newtheorem{corollary}[theorem]{Corollary}
\theoremstyle{definition}
\newtheorem{definition}[theorem]{Definition}
\newtheorem{example}[theorem]{Example}
\newtheorem{conjecture}[theorem]{Conjecture}
\newcommand{\Av}{\mathrm{Av}}
\newcommand*{\Cdot}{\raisebox{-0.7ex}{\scalebox{2}{$\cdot$}}}
\title{Extending results on Wilf-equivalence of partial shuffles}
\author[M. Albert]{Michael Albert}
\address{School of Computing, University of Otago, 133 Union Street East, Dunedin 9016, New Zealand}
\email{michael.albert@otago.ac.nz}
\author[D. Searles]{Dominic Searles}
\address{Department of Mathematics and Statistics, University of Otago, 730 Cumberland St., Dunedin 9016, New Zealand}
\email{dominic.searles@otago.ac.nz}
\author[M. Slattery-Holmes]{Matthew Slattery-Holmes}
\address{Department of Mathematics and Statistics, University of Otago, 730 Cumberland St., Dunedin 9016, New Zealand}
\email{slama077@student.otago.ac.nz}
\subjclass[2020]{05A05, 05A15}
\date{December 23, 2025}
\keywords{permutation patterns, enumeration, Wilf-equivalence}
\begin{document}

\maketitle
\begin{abstract}

In 2020, Bloom and Sagan defined subsets of the symmetric group $\mathfrak{S}_n$ called partial shuffles, and proved a formula for the Schur expansion of the pattern quasisymmetric function associated with a partial shuffle. In their proof, they establish that any two partial shuffles of the same size are Wilf-equivalent. We give an alternative proof of this fact, using an iterative approach. We also show that Wilf-equivalence is preserved on including a decreasing pattern in partial shuffles, and we provide some enumerative results for avoidance classes whose bases consist of a partial shuffle and a decreasing permutation.
\end{abstract}

\section{Introduction}
Let $\mathfrak{S}_n$ denote the set of permutations of size $n$. We write a permutation $\pi\in \mathfrak{S}_n$ as $\pi=\pi_1\ldots\pi_n$, where $\pi_i=\pi(i)$ is the \textit{value} of the $i^{th}$ \textit{element} of $\pi$. A permutation $\pi = \pi_1\ldots\pi_n$ is said to \textit{contain} a permutation $\rho = \rho_1\ldots\rho_k$ as a \textit{pattern} if there are indices $i_1<\cdots< i_k$ such that in the \textit{subpermutation} $\pi_{i_1}\ldots \pi_{i_k}$ of $\pi$, $\pi_{i_j} < \pi_{i_{j+1}}$ if and only if $\rho_j < \rho_{j+1}$. If $\pi$ does not contain $\rho$, then $\pi$ \textit{avoids} $\rho$, and $\pi$ avoids a set of patterns $\Pi$ if $\pi$ avoids every pattern in $\Pi$.
The set of all permutations that avoid $\Pi$ is denoted $\mathrm{Av}(\Pi)$, and the subset of all permutations of size $n$ that avoid $\Pi$ is denoted $\mathrm{Av}_n(\Pi)$. 
We write $\#S$ to denote the number of elements in the set $S$, and we say that two sets of patterns $\Pi$ and $\Pi'$ are \textit{Wilf-equivalent} if $\#\mathrm{Av}_n(\Pi) = \#\mathrm{Av}_n(\Pi')$ for all $n$. 
The interval of integers $x$ such that $a\leqslant x \leqslant b$ is written $[a,b]$, and if $a=1$ we simply write $[b]$. We let $\iota_k$ denote the increasing permutation of size $k$, and $\delta_k$ the decreasing permutation of size $k$. 

Bloom and Sagan \cite{bloom2020revisiting} defined subsets of $\mathfrak{S}_n$ called partial shuffles, generalising a definition of \cite{HPS20}. 
For $a\geqslant 1$ and $b\geqslant 0$ such that $a+b \geqslant 2$, the \textit{partial shuffle} $\Pi(a,b)$ of size $a+b$ is the set of permutations of size $a+b$ that one obtains by taking the elements $[a+b]\backslash\{a\}$ in increasing order, and placing $a$ into every position except for that which would give $\iota_{a+b}$. For example, $\Pi(3,2) = \{12453,12435,13245,31245\}$. 
In \cite{bloom2020revisiting} it was shown that the pattern quasisymmetric function (\cite{HPS20}) associated with a partial shuffle is Schur-positive, proving a more general version of a conjecture of \cite{HPS20}. 

One interesting result established in the work of \cite{bloom2020revisiting} is that any two partial shuffles of the same size are Wilf-equivalent. In Section 2 of this article we provide an alternative proof of this fact, then in Section 3 we extend it to include any decreasing permutation $\delta_k$ in the set of patterns to be avoided.
We conclude with some enumerative results, showing that for sufficiently large $n$, $\#\mathrm{Av}_n(\Pi(a,b),\delta_k)$ is given by a polynomial in $n$, of degree $(a+b-2)(k-2)$, and in the special case when $k=3$, we show that the leading coefficient of this polynomial is a Catalan number.

\section{Partial shuffles and Wilf-equivalence}

In this section, we will prove that any two partial shuffles of the same size are Wilf-equivalent. Although this result is known (\cite[Lemma 4.4]{bloom2020revisiting}), our proof proceeds differently, using an iterative approach. 
We will show that for $a\geqslant 2$ and $b\geqslant 0$, $\Pi(a,b)$ and $\Pi(a-1,b+1)$ are Wilf-equivalent, by describing injective functions between their respective avoidance classes. This implies by induction that $\Pi(a+b,0)$ is Wilf-equivalent to $\Pi(c,d)$ for all pairs $c,d$ such that $c+d\geqslant 2$, $c\geqslant 1$, $d\geqslant 0$, and $c+d$ sums to $a+b$. 

Throughout this work we make use of \textit{permutation diagrams}. Given a permutation $\pi=\pi_1\ldots\pi_n$, the diagram of $\pi$ is the set of points $\{(i,\pi_i) \, : \, 1 \leqslant i \leqslant  n\}$; see Figure \ref{fig: a and a-1}. If an increasing or decreasing pattern appears in $\pi$, we may represent this with a line of positive or negative gradient, respectively, in the diagram of $\pi$. At times, we use the cardinal directions to describe relative position of permutation elements in permutation diagrams, e.g., if $\pi_i<\pi_j$ and $i>j$, we say $\pi_i$ is southeast of $\pi_j$. Throughout, we often present figures involving permutation diagrams with some elements bolded and others in grey. The bolded elements should be treated as the primary focus, and the elements in grey considered as providing additional context. 

We will often discuss the smallest element of a permutation that acts as an $a$ in a pattern from $\Pi(a-1,b+1)$, which we denote $\underline{a}$. We also consider every element that acts as an $a-1$ in some pattern from $\Pi(a-1,b+1)$ for which $\underline{a}$ acts as the $a$. We say that these are the $a-1$ elements \textit{associated} with $\underline{a}$, and the smallest of these, we denote $\underline{a-1}$.

\begin{example}\label{Pi22ex}
Let $a=3$, $b=1$, and consider the permutation $\pi = 582916743\in \Av(\Pi(3,1))$. This permutation contains patterns from $\Pi(2,2)$. The smallest element in any of these patterns that acts as an $a$  is $\underline{3} = \pi_6=6$, indicated in blue in Figure~\ref{fig: a and a-1}, and the associated $a-1$ elements are $\pi_1,\pi_3, \pi_8,$ and $\pi_9$, shown in red. 
\begin{figure}[H]
\centering
\begin{tikzpicture}[scale=1.0]

\draw[thick] (0,0) rectangle (5,5);

\fill[red] (0.5,2.5) circle (0.1);
\fill[black] (1,4) circle (0.1);
\fill[red] (1.5,1) circle (0.1);
\fill[black] (2,4.5) circle (0.1);
\fill[black] (2.5,0.5) circle (0.1);
\fill[blue] (3,3) circle (0.1);
\fill[black] (3.5,3.5) circle (0.1);
\fill[red] (4,2) circle (0.1);
\fill[red] (4.5,1.5) circle (0.1);

\end{tikzpicture}
\caption[below]{The permutation $582916743\in \Av_9(\Pi(3,1))$. Of the patterns from $\Pi(2,2)$, the smallest $a$ element is shown in blue, where $a = 3$, and all associated $a-1$ elements are shown in red.}\label{fig: a and a-1}
\end{figure}
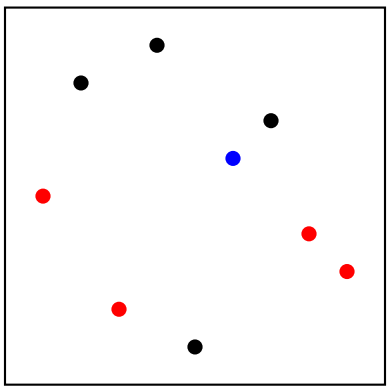
\end{example}

\begin{lemma}\label{intervallemma}
Let $\pi\in \mathfrak{S}_n$ such that $\pi\notin \Av(\Pi(a-1,b+1)$. Then the $a-1$ elements associated with $\underline{a}$ form an interval. Specifically, the elements of $[\underline{a-1},~\underline{a}-1]$ are the $a-1$ elements associated with $\underline{a}$. 
\end{lemma}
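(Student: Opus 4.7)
The plan is to first reformulate the property of being an ``$a-1$'' element associated with $\underline{a}$ in witness-subsequence terms: $v$ is associated with $\underline{a}$ if and only if there is an increasing subsequence $s_1<s_2<\cdots<s_{a+b-1}$ of $\pi$ (increasing in both value and position) with $s_{a-1}=\underline{a}$, such that $s_{a-2}<v<\underline{a}$ (using the convention $s_0=-\infty$) and the position of $v$ in $\pi$ is not strictly between the positions of $s_{a-2}$ and $\underline{a}$. The last clause just expresses that the full $(a+b)$-subsequence obtained by adjoining $v$ is not $\iota_{a+b}$. From this characterisation it is immediate that any associated $v$ satisfies $v<\underline{a}$ and $v\geqslant\underline{a-1}$, so the associated values are contained in $[\underline{a-1},\underline{a}-1]$.

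For the reverse inclusion I would show every integer $w$ with $\underline{a-1}<w\leqslant \underline{a}-1$ is also associated (the case $w=\underline{a-1}$ holding by definition). Fix a witnessing subsequence $s_1<\cdots<s_{a+b-1}$ for $\underline{a-1}$; the claim is that the \emph{same} subsequence witnesses $w$. The value condition $s_{a-2}<w<\underline{a}$ is immediate from $s_{a-2}<\underline{a-1}<w<\underline{a}$, so only the position condition needs checking.

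This is where the minimality of $\underline{a}$ enters. Suppose for a contradiction that the position of $w$ is strictly between the positions of $s_{a-2}$ and $\underline{a}$. Then $s_1,\ldots,s_{a-2},w,s_a,\ldots,s_{a+b-1}$ is again increasing in both value and position, has length $a+b-1$, and puts $w$ at rank $a-1$; so $w$ can play the ``$a$'' role in a pattern from $\Pi(a-1,b+1)$. To complete such a pattern, I would use $\underline{a-1}$ as the ``$a-1$'' element: its value lies in $(s_{a-2},w)$, and because $(\mathrm{pos}(s_{a-2}),\mathrm{pos}(w))\subset (\mathrm{pos}(s_{a-2}),\mathrm{pos}(\underline{a}))$, its position avoids the smaller interval whenever it avoids the larger one (which it does, since it witnesses $\underline{a-1}$). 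This exhibits a pattern from $\Pi(a-1,b+1)$ in $\pi$ whose ``$a$'' element has value $w<\underline{a}$, contradicting the minimality of $\underline{a}$. Hence the position condition for $w$ holds and $w$ is associated.

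The main obstacle is to track the value and position conditions simultaneously while recognising that the upward-closure-in-value step is really the minimality of $\underline{a}$ in disguise. The only formality worth flagging is the edge case $a=2$, where $s_{a-2}$ is absent; setting $s_0=-\infty$ and treating the identity insertion point as position $0$ makes the bookkeeping formally valid and the argument goes through unchanged.
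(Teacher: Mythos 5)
Your proof is correct and follows essentially the same route as the paper's: both reduce to a dichotomy on whether $w$ lies positionally inside the interval between the last role-$(a-2)$ element and $\underline{a}$, showing that the ``inside'' case would make $w$ a smaller ``$a$'' (using $\underline{a-1}$ as the displaced element), contradicting minimality. The only cosmetic difference is that the paper works with the rightmost element acting as $a-2$ over all witnessing occurrences, whereas you fix a single witness for $\underline{a-1}$; both choices support the same argument.
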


\begin{proof}
Consider a pattern in $\pi$ from $\Pi(a-1,b+1)$ for which $\underline{a}$ acts as the $a$ and $\underline{a-1}$ acts as the $a-1$. 
In the permutation diagram of $\pi$, such a pattern presents as follows. If $a>2$, there is an increasing subpermutation $\iota_{a-2}$, an increasing subpermutation $\iota_{b+1}$ (whose lowest element is $\underline{a}$) northeast of the $\iota_{a-2}$, along with $\underline{a-1}$ which is above $\iota_{a-2}$, below $\iota_{b+1}$, and either left of the largest element of $\iota_{a-2}$ or right of $\underline{a}$. If $a=2$, there is an increasing subpermutation $\iota_{b+1}$ whose lowest element is $\underline{a}$, along with $\underline{a-1}$ which is below $\iota_{b+1}$ and right of $\underline{a}$. 

Let $r$ denote the rightmost element of $\pi$ that acts as an $a-2$ for such a pattern from $\Pi(a-1,b+1)$ (in the case $a=2$, $(r, \pi(r))$ can be considered to be the origin $(0,0)$). See Figure~\ref{fig: a-1 interval}. Let $q\in [\underline{a-1}+1,\underline{a}-1]$. If $q$ is left of $r$ or right of $\underline{a}$, then by definition it is an $a-1$ element associated with $\underline{a}$. If $q$ is right of $r$ and left of $\underline{a}$, then since $q>\underline{a-1}$, an instance of the same pattern from $\Pi(a-1,b+1)$ is found by replacing $\underline{a}$ with $q$, but this means $q$ plays the role of an $a$ in a pattern from $\Pi(a-1,b+1)$, contradicting the minimality of $\underline{a}$.
\end{proof}
\begin{figure}[h]
\centering
\begin{tikzpicture}[scale=0.8]

\draw[thick] (0,0) rectangle (7,7);
\draw[thick,dashed](7,3)--(0,3);
\draw[thick,dashed] (7,5) -- (0,5);

\fill[gray,opacity=0.25] (3.5,2.5) -- (3.5,5) -- (5,5) -- (5,2.5) -- cycle;
\fill[white] (5,5.1) circle (0.15) node[black, above left] {$\underline{a}$};
\fill[red] (5,5) circle (0.15);
\fill[white] (2,2.9) circle (0.15) node[black, below] {$\underline{a-1}$};
\fill[red] (2,3) circle (0.15) ;
\fill[white] (3.5,2.4) circle (0.15) node[black, below] {$r$};
\fill[red] (3.5,2.5) circle (0.15);

\fill[white] (1.44,4) circle (0.15) node[black,left]{$q$};
\fill[red] (1.5,4) circle (0.15);

\draw[ultra thick,black] (1.5,0.5) -- (3.39,2.39) node[midway, below right] {$\iota_{a-2}$};

\draw[ultra thick,black] (5.3,5.3) -- (6.6,6.6)  node[midway, above left] {$\iota_{b}$};

\end{tikzpicture}
\caption[below]{If $\underline{a}$ is the smallest $a$ from any pattern in $\Pi(a-1,b+1)$ appearing in a permutation $\pi$, and $r$ plays the role of the rightmost $a-2$, then nothing may occur in the shaded region, and any element $q$ such that $r<q<\underline{a}$ is an $a-1$ element associated with $\underline{a}$.}\label{fig: a-1 interval}
\end{figure}
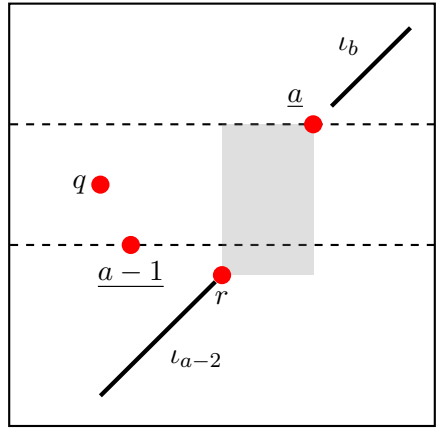

Given $a,b$ such that $a+b\geqslant 2$, $a\geqslant 2$ and $b\geqslant 0$, we now use Lemma~\ref{intervallemma} to define a function $S$ that acts on permutations. Note that the function $S$ depends on the values of $a$ and $b$, but we will simply use $S$ throughout, in order to reduce excessive notation. 

\begin{definition}\label{Smap}
Let $\pi\in \mathfrak{S}_n$. If $\pi\in \Av(\Pi(a-1,b+1))$, then $S(\pi)=\pi$. If $\pi\notin \Av(\Pi(a-1,b+1))$, then informally, $S(\pi)$ is obtained by rotating the values in the interval $[\underline{a-1},\underline{a}]$, i.e. by increasing the value of every $a-1$ element associated with $\underline{a}$ by 1, and then replacing $\underline{a}$ with $\underline{a-1}$. 
Formally:
\[
S(\pi)_i =
\begin{cases}
    \pi_i + 1 & \text{if $\pi_i \in [\underline{a-1},\underline{a}-1]$} \\
    \underline{a-1}  & \text{if }\pi_i = \underline{a} \\
    \pi_i & \text{otherwise}
\end{cases}
\]
\end{definition}

By Lemma \ref{intervallemma}, $S$ only changes the values of permutation elements whose values lie within an interval, so $S$ sends permutations to permutations. If $\pi_i=q$ then we will slightly abuse notation by referring to the image of $q$ under the map $S$ as $S(q)$ rather than the more correct $S(\pi)_i$. Note that the map $S$ removes patterns in $\Pi(a-1,b+1)$ from $\pi$, replacing them with patterns from $\Pi(a,b)$. 

\begin{example}
Continuing with Example~\ref{Pi22ex}, if $a=3$ and $b=1$ then the map $S$ acts on the permutation $\pi=582916743\in \Av(\Pi(3,1))$ by rotating elements in the interval $[\underline{2},\underline{3}]$ to give $S(\pi) =683912754$ (see Figure~\ref{fig: pi and S(pi)}). Note that $S(\pi)$ contains patterns from $\Pi(3,1)$ and $\Pi(2,2)$.

\begin{figure}[h]
\centering
\begin{tikzpicture}[scale=1.0]

\draw[thick] (0,0) rectangle (5,5);
\draw[thick,dashed](5,3)--(0,3);
\draw[thick,dashed] (5,1) -- (0,1);

\draw[thick] (7,0) rectangle (12,5);
\draw[thick,dashed](12,3)--(7,3);
\draw[thick,dashed] (12,1) -- (7,1);

\fill[white] (-0.6,2.24) node[black]{$\pi = $};

\fill[white] (6.2,2.24) node[black]{$S(\pi)=$};

\fill[red] (0.5,2.5) circle (0.1);
\fill[black] (1,4) circle (0.1);
\fill[red] (1.5,1) circle (0.1);
\fill[black] (2,4.5) circle (0.1);
\fill[black] (2.5,0.5) circle (0.1);
\fill[blue] (3,3) circle (0.1);
\fill[black] (3.5,3.5) circle (0.1);
\fill[red] (4,2) circle (0.1);
\fill[red] (4.5,1.5) circle (0.1);

\fill[red] (7.5,3) circle (0.1);
\fill[black] (8,4) circle (0.1);
\fill[red] (8.5,1.5) circle (0.1);
\fill[black] (9,4.5) circle (0.1);
\fill[black] (9.5,0.5) circle (0.1);
\fill[blue] (10,1) circle (0.1);
\fill[black] (10.5,3.5) circle (0.1);
\fill[red] (11,2.5) circle (0.1);
\fill[red] (11.5,2) circle (0.1);

\end{tikzpicture}
\caption{$\pi = 582916743 \in \Av(\Pi(3,1))$ and its image $S(\pi)=683912754$.} \label{fig: pi and S(pi)}

\end{figure}
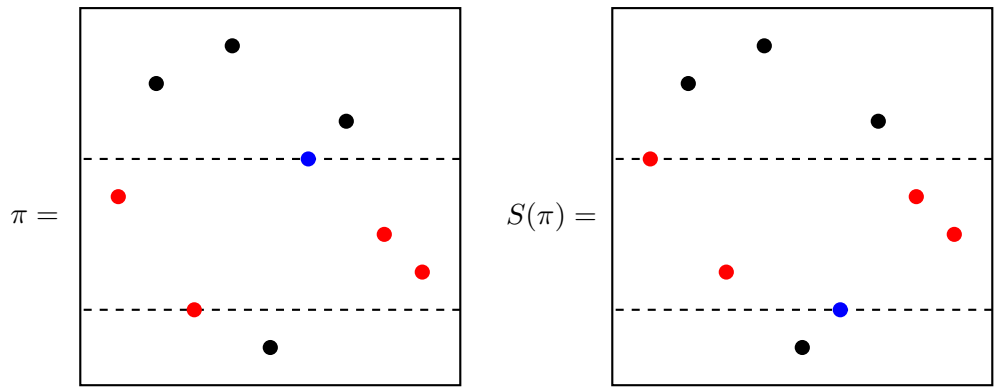
\end{example}

Two sequences of the same length are said to be \emph{order-isomorphic} if their entries are in the same relative order. 
The following fact is an immediate consequence of Lemma \ref{intervallemma}.

\begin{corollary}\label{ignorej}
Let $\pi\in \mathfrak{S}_n$ and let $\pi_{i_1}\pi_{i_2}\ldots \pi_{i_m}$ be a subpermutation of $\pi$. If $\pi_{i_1}\pi_{i_2}\ldots \pi_{i_m}$ is not order-isomorphic to $S(\pi_{i_1})S(\pi_{i_2})\ldots S(\pi_{i_m})$, then the subpermutation $\pi_{i_1}\pi_{i_2}\ldots \pi_{i_m}$ contains $\underline{a}$ and at least one associated $a-1$. 
\end{corollary}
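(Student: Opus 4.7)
The plan is to prove the contrapositive: if the subpermutation $\pi_{i_1}\ldots\pi_{i_m}$ either omits $\underline{a}$, or contains $\underline{a}$ but no associated $a-1$ element, then it is order-isomorphic to $S(\pi_{i_1})\ldots S(\pi_{i_m})$. The key opening observation, which comes directly from Definition~\ref{Smap} together with Lemma~\ref{intervallemma}, is that $S$ alters only values lying in the interval $[\underline{a-1},\underline{a}]$: the associated $a-1$ elements (whose values fill $[\underline{a-1},\underline{a}-1]$) each shift up by one, and $\underline{a}$ drops to $\underline{a-1}$. Every element whose value lies outside $[\underline{a-1},\underline{a}]$ is fixed by $S$, and conversely every element of $\pi$ that is neither $\underline{a}$ nor an associated $a-1$ has value outside this interval.

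I would then split into three cases. First, if the subpermutation contains neither $\underline{a}$ nor any associated $a-1$, then $S$ acts as the identity on all of $\pi_{i_1},\ldots,\pi_{i_m}$, so order-isomorphism is immediate. Second, if $\underline{a}$ is present but no associated $a-1$ is, only $\underline{a}$ has its value changed (to $\underline{a-1}$), while every other $\pi_{i_j}$ is unchanged and lies either strictly below $\underline{a-1}$ or strictly above $\underline{a}$; in either case the inequality between $\pi_{i_j}$ and $\underline{a}$ has the same direction as the inequality between $\pi_{i_j}$ and $\underline{a-1}$. Third, if some associated $a-1$ elements appear in the subpermutation but $\underline{a}$ does not, then the shifted values of these associated elements occupy $[\underline{a-1}+1,\underline{a}]$, which is still bracketed by the same outside values as before; comparisons with outside-interval elements of the subpermutation are therefore preserved, and among themselves the associated $a-1$ elements all shift up by the same amount of $1$, retaining their relative order.

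I expect no real obstacle here: the argument is essentially a three-case check whose engine is Lemma~\ref{intervallemma}. The only point that requires a moment of care is the third case, where one must observe that the image interval $[\underline{a-1}+1,\underline{a}]$ is still entirely above every value strictly less than $\underline{a-1}$ and entirely below every value strictly greater than $\underline{a}$, so that all comparisons with outside-interval elements of the subpermutation keep the same sign after applying $S$.
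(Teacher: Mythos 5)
Your proof is correct and is exactly the verification the paper leaves implicit: the paper states Corollary~\ref{ignorej} as an immediate consequence of Lemma~\ref{intervallemma} without further argument, and your contrapositive three-case check (using that $S$ only moves values within the interval $[\underline{a-1},\underline{a}]$ and preserves all comparisons across its boundary) is the same underlying reasoning, just written out in full.
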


\begin{definition}
For $a\geqslant 2$, the unique permutation in the intersection of the sets $\Pi(a,b)$ and $\Pi(a-1,b+1)$ is the permutation obtained by transposing the elements $a$ and $a-1$ in the increasing permutation $\iota_{a+b}$. We call this permutation $\sigma_{a,b}$. 
\end{definition}
For example, $\sigma_{3,1} = 1324$, which is the unique permutation in $\Pi(3,1)\cap\Pi(2,2)$.  

\begin{lemma}\label{nosigmaab}
If $\pi\in \Av(\sigma_{a,b})$ then $S(\pi)\in \Av(\sigma_{a,b})$. 
\end{lemma}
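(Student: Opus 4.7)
The plan is to prove the contrapositive: if $S(\pi)$ contains a copy of $\sigma_{a,b}$, then so does $\pi$. If $\pi\in\Av(\Pi(a-1,b+1))$ then $S(\pi)=\pi$ and there is nothing to show, so I assume $\underline{a}$ and $\underline{a-1}$ are defined, and $S$ acts by cyclically rotating the values in $[\underline{a-1},\underline{a}]$. Fix a copy of $\sigma_{a,b}$ in $S(\pi)$ at positions $i_1<\dots<i_{a+b}$ with sorted values $v_1<\dots<v_{a+b}$ arranged as $v_1,\dots,v_{a-2},v_a,v_{a-1},v_{a+1},\dots,v_{a+b}$ along these positions.

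By Corollary \ref{ignorej}, if the subpermutation of $\pi$ at $i_1,\dots,i_{a+b}$ either does not include $\underline{a}$ or does not include any associated $a-1$ element, then it is order-isomorphic to the subpermutation of $S(\pi)$ at the same positions, giving a copy of $\sigma_{a,b}$ in $\pi$ immediately. So the remaining case is that $\underline{a}$ sits at some $i_k$ (which forces the smallest $v_j$ in $[\underline{a-1},\underline{a}]$ to equal $\underline{a-1}$) and at least one associated $a-1$ also sits at some $i_{k'}$. Letting $[l,r]$ index the $v_j\in[\underline{a-1},\underline{a}]$, this says $v_l=\underline{a-1}$ and $r>l$. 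Applying $S^{-1}$ to recover $\pi$ at these positions cyclically rotates the values within the block, so the pattern of $\pi$ at $i_1,\dots,i_{a+b}$ is no longer $\sigma_{a,b}$ itself.

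To locate $\sigma_{a,b}$ elsewhere in $\pi$, I would invoke the structural picture from the proof of Lemma~\ref{intervallemma}: around $\underline{a}$ in $\pi$ there is an $\iota_{a-2}$ to its southwest and an $\iota_{b+1}$ (with lowest element $\underline{a}$) to its northeast, and each associated $a-1$ lies either weakly west of the rightmost $\iota_{a-2}$ element or strictly east of $\underline{a}$. The goal is then to stitch a $\sigma_{a,b}$ copy in $\pi$ from the $\iota_{a-2}$, followed by $\underline{a}$, followed by a suitably chosen associated $a-1$ whose position is strictly east of $\underline{a}$, followed by $b$ elements with values exceeding $\underline{a}$ to its right, taken from the $\iota_{b+1}$ or from those $v_j$ with $j>r$ (which, being outside the rotation interval, sit unchanged at their $S(\pi)$-positions $i_{\sigma_{a,b}^{-1}(j)}$ to the east of the associated $a-1$ positions in the block).

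The main obstacle is the final step: showing that a valid choice of associated $a-1$ always exists, positioned strictly east of $\underline{a}$ and strictly west of enough high-valued elements. I expect this to split into sub-cases depending on whether $l$ or $r$ coincides with $a-1$ or $a$, since in those sub-cases some block elements sit on the ``wrong'' side of $\underline{a}$ in $\pi$ (for instance, when $l=a-1,\ r=a$, the pattern of $\pi$ at $i_1,\dots,i_{a+b}$ becomes the identity $\iota_{a+b}$, so one must genuinely find a different $\sigma_{a,b}$ witness in $\pi$). In each sub-case one combines elements of the original $\sigma_{a,b}$ copy in $S(\pi)$ that lie outside the rotation interval with the structural $\iota_{a-2}$, $\iota_{b+1}$, and an appropriate associated $a-1$ guaranteed by Lemma~\ref{intervallemma}; this technical verification is the heart of the argument.
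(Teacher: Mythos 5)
Your setup matches the paper's: reduce via Corollary~\ref{ignorej} to the case where the putative copy of $\sigma_{a,b}$ in $S(\pi)$ involves $S(\underline{a})$ and the image of at least one associated $a-1$, then analyse that case using the structural picture around $\underline{a}$ (the $\iota_{a-2}$ to its southwest, the $\iota_{b+1}$ starting at $\underline{a}$, and the interval of associated $a-1$'s). But the proposal stops exactly where the proof begins: you explicitly defer the case analysis (``I expect this to split into sub-cases\dots this technical verification is the heart of the argument''), and that verification is essentially the entire content of the paper's proof. As written, this is a plan with a genuine gap, not a proof.

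Beyond the omission, the single stitching template you propose --- always using $\underline{a}$ as the $a$ of the new copy together with an associated $a-1$ strictly east of it, followed by $b$ large elements --- cannot work uniformly. An associated $a-1$ east of $\underline{a}$ arises from a pattern in $\Pi(a-1,b+1)\setminus\{\sigma_{a,b}\}$ in which the $a-1$ sits \emph{after} at least one element of the $\iota_{b}$ above $\underline{a}$ (if it sat immediately after $\underline{a}$ and before all of them, $\pi$ would already contain $\sigma_{a,b}$); so such an element need not have $b$ larger elements to its own east, and the recipe fails precisely because $\pi$ avoids $\sigma_{a,b}$. The paper instead organises the argument by the role $S(\underline{a})$ plays in the copy (part of an increasing run, the $a-1$, or the $a$), and the witness for $\sigma_{a,b}$ in $\pi$ changes from case to case: sometimes it reuses the pattern's own $a$ and $a-1$ elements with substituted increasing runs, sometimes it uses an auxiliary pair such as $a_\sigma$ and $t$, and in some sub-cases one derives a contradiction with the minimality of $\underline{a}$ or the definition of the associated $a-1$'s rather than exhibiting a copy at all. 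You would need to carry out an analysis of comparable granularity for the proof to stand.
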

\begin{proof}
Let $\pi\in \Av(\sigma_{a,b})$. If $\pi\in \Av(\Pi(a-1,b+1))$ the lemma is trivially true, since $S(\pi)=\pi$. Suppose $\pi\notin \Av(\Pi(a-1,b+1))$, and $S(\pi)$ contains $\sigma_{a,b}$. We know by Corollary \ref{ignorej} that the pattern $\sigma_{a,b}$ in $S(\pi)$ must involve $S(\underline{a})$. Recall that patterns from $\Pi(a-1,b+1)$ have the form $12\ldots(a-2)a\ldots (a+b)$ with $a-1$ placed somewhere other than between $a-2$ and $a$. Therefore there is an increasing subpermutation $\iota_{b}$ northeast of $\underline{a}$, and an increasing subpermutation $\iota_{a-2}$ southwest of $\underline{a}$. In $S(\pi)$ the $a$ and $a-1$ elements of $\sigma_{a,b}$ have an $\iota_{b}$ to the northeast, and an $\iota_{a-2}$ to the southwest. 

Suppose in $S(\pi)$, that $S(\underline{a})$ is part of one of the increasing subpermutations in $\sigma_{a,b}$, either $\iota_{a-2}$ southwest of the $a(a-1)$, or $\iota_{b}$ northeast of the $a(a-1)$. There is an $\iota_b$ northeast of $\underline{a}$ in $\pi$, and therefore of $S(\underline{a})$ in $S(\pi)$, and similarly, an $\iota_{a-2}$ below $\underline{a-1}$ and left of $\underline{a}$ in $\pi$, and therefore southwest of $S(\underline{a})$ in $S(\pi)$. It follows that $\pi$ contains $\sigma_{a,b}$ as a pattern, a contradiction. 

Next, suppose that $S(\underline{a})$ is the $a-1$ element in $\sigma_{a,b}$. This means that the $a$  element of $\sigma_{a,b}$ is northwest of $S(\underline{a})$ in $S(\pi)$. We will denote this $a$ element by $a_\sigma$. If the pre-image under $S$ of $a_\sigma$ was not an $a-1$ associated with $\underline{a}$ in $\pi$, then it must be northwest of $\underline{a}$. Since the increasing subpermutation $\iota_{b}$ is above $a_\sigma$ and right of $S(\underline{a})$, it must be northeast of $\underline{a}$. Similarly, the increasing subpermutation $\iota_{a-2}$ is below $S(\underline{a})$ and left of $a_\sigma$, so in $\pi$ it is southwest of both $a_\sigma$ and $\underline{a}$. This gives an occurrence of $\sigma_{a,b}$ in $\pi$ (using $\underline{a}$ instead of $S(\underline{a})$ and all the other elements of $\sigma_{a,b}$ the same as in $S(\pi)$), which is a contradiction. 

If $a_\sigma$ was an $a-1$ for $\underline{a}$ in $\pi$, we know there is some $a-2$ element $t$ left of $\underline{a}$ and right of $a_\sigma$, such that $t$ is the largest element of an increasing subpermutation $\iota_{a-2}$ in $\pi$. There is another copy of $\iota_{a-2}$ southwest of both $a_\sigma$ and $S(\underline{a})$, whose largest element $m$ plays the role of $a-2$ in $\sigma_{a,b}$ in $S(\pi)$. 

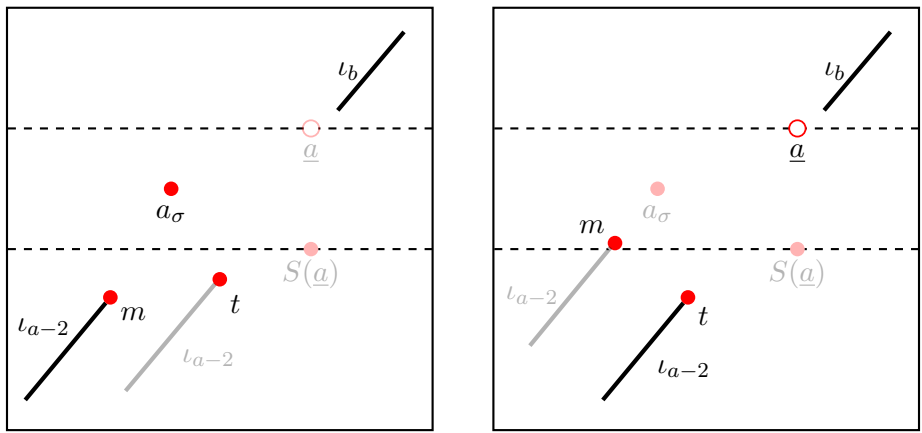
\begin{figure}[h]
\centering
\begin{tikzpicture}[scale=0.8]

\draw[thick] (0,0) rectangle (7,7);
\draw[thick,dashed](7,3)--(0,3);
\draw[thick,dashed] (7,5) -- (0,5);

\fill[white] (5,4.9) circle (0.12) node[black,below,opacity=0.3] {$\underline{a}$};
\fill[red,opacity=0.3] (5,5) circle (0.15);
\fill[white] (5,5) circle (0.12) ;
\fill[white] (5,3) circle (0.12) node[black,below,opacity=0.3] {$S(\underline{a})$};
\fill[red,opacity=0.3] (5,3) circle (0.12) ;
\fill[white] (2.7,3.9) circle (0.12) node[black,below] {$a_\sigma$};
\fill[red] (2.7,4) circle (0.12) ;
\fill[white] (3.5,2.4) circle (0.12) node[black,below right] {$t$};
\fill[red] (3.5,2.5) circle (0.12) ;

\begin{scope}[xshift = -8cm]
\draw[ultra thick,black,opacity=0.3] (9.95,0.65) -- (11.42,2.42) node[midway, below right] {$\iota_{a-2}$};
\draw[ultra thick,black] (8.3,0.5) -- (9.7,2.2) ;
\fill[white] (8.6,1.7) circle (0.01) node[black] {$\iota_{a-2}$};

\draw[ultra thick,black] (13.44,5.3) -- (14.53,6.6)  node[midway,left] {$\iota_{b}$};
\end{scope}
\fill[white] (1.7,2.2) circle (0.12) node[black,below right]{$m$};
\fill[red] (1.7,2.2) circle (0.12);

\draw[thick] (8,0) rectangle (15,7);
\draw[thick,dashed](15,3)--(8,3);
\draw[thick,dashed] (15,5) -- (8,5);

\fill[white] (13,4.9) circle (0.12) node[black,below] {$\underline{a}$};
\fill[red] (13,5) circle (0.15);
\fill[white] (13,5) circle (0.12) ;
\fill[white] (13,3) circle (0.12) node[black,below,opacity=0.3] {$S(\underline{a})$};
\fill[red,opacity=0.3] (13,3) circle (0.12) ;
\fill[white] (10.7,3.9) circle (0.12) node[black,below,opacity=0.3] {$a_\sigma$};
\fill[red,opacity=0.3] (10.7,4) circle (0.12) ;

\fill[white] (10,3.1) circle (0.15) node[black,above left]{$m$};
\fill[red] (10,3.1) circle (0.12);

\draw[ultra thick,black] (9.8,0.5) -- (11.2,2.2) node[midway, below right] {$\iota_{a-2}$};
\draw[ultra thick,black,opacity=0.3] (8.6,1.4) -- (9.93,3.03) node[midway, left] {$\iota_{a-2}$};

\draw[ultra thick,black] (13.44,5.3) -- (14.53,6.6)  node[midway,left] {$\iota_{b}$};

\fill[white] (11.2,2.2) circle (0.12) node[black,below right] {$t$};
\fill[red] (11.2,2.2) circle (0.12) ;

\end{tikzpicture}
\caption[below]{Left: If $S(\underline{a})$ is the $a-1$ element of $\sigma_{a,b}$, and $m<t$, then $\pi$ contains $\sigma_{a,b}$. Right: If $m>t$ in $\pi$, then $m$ cannot be below $S(\underline{a})$ in $S(\pi)$. }\label{fig: S(underline a) is a-1)}
\end{figure}
If $m<t$, (see Figure \ref{fig: S(underline a) is a-1)}, left) this would mean that $\pi$ contains $\sigma_{a,b}$ with $a_\sigma$ acting as the $a$, and $t$ acting as the $a-1$, contradicting our assumption that $\pi\notin\Av(\sigma_{a,b})$. We also cannot have $m>t$ (see Figure \ref{fig: S(underline a) is a-1)}, right) because then $m$ would have been an $a-1$ in $\pi$ with $\underline{a}$ acting as the $a$ element, and the $\iota_{a-2}$ ending with $t$ and the $\iota_{b}$ above $\underline{a}$ providing the other sections of the pattern. If $m$ did act as an $a-1$, then its image under $S$ would be above $S(\underline{a})$ in $S(\pi)$, contradicting the fact that it acts as the $a-2$ in $\sigma_{a,b}$. Hence, $S(\underline{a})$ cannot be the $a-1$ element of $\sigma_{a,b}$.  

Suppose $S(\underline{a})$ acts as $a$ in $\sigma_{a,b}$. There is an element $(a-1)_{\sigma}$ that acts as an $a-1$ southeast of $S(\underline{a})$ in $\sigma_{a,b}$ in $S(\pi)$. There is also an $\iota_{a-2}$ southwest of both of these elements. Any occurrence of $\sigma_{a,b}$ in $S(\pi)$, by Corollary \ref{ignorej}, must contain $S(p)$ for some element $p$ that acted as an $a-1$ associated with $\underline{a}$ in $\pi$. This has to be to the right of $S(\underline{a})$ and $(a-1)_\sigma$, since $S(p) > S(\underline{a})$, meaning that $S(p)$ is part of the increasing subpermutation $\iota_{b}$ northeast of the $a$ element, $S(\underline{a})$. This pattern $\iota_{b}$ doesn't involve $S(\underline{a})$, meaning $p$ is also part of the pattern $\iota_{b}$ in $\pi$. 

As $\underline{a}$ was an $a$ element from a pattern in $\Pi(a-1,b+1)\backslash\{\sigma_{a,b}\}$ in $\pi$, for $p$ to have been an $a-1$, there must be an increasing subpermutation $\iota_{b}$ whose smallest element $m$ is northwest of $\underline{a}$ and northeast of $p$.

\begin{figure}[h]
\centering
\begin{tikzpicture}[scale=0.8]

\begin{scope}[xshift=-8cm]
   \draw[thick] (8,0) rectangle (15,7);
\draw[thick,dashed](15,3)--(8,3);
\draw[thick,dashed] (15,5) -- (8,5);

\fill[white] (10.6,4.9) circle (0.12) node[black,below] {$\underline{a}$};
\fill[red] (10.6,5) circle (0.15);
\fill[white] (10.6,5) circle (0.12) ;
\fill[white] (10.6,3) circle (0.12) node[black,below,opacity=0.3] {$S(\underline{a})$};
\fill[red,opacity=0.3] (10.6,3) circle (0.12) ;
\fill[white] (11.5,2.6) circle (0.12) node[black,below right] {$(a-1)_\sigma$};
\fill[red] (11.6,2.6) circle (0.12) ;
\fill[white] (13.2,3.8) circle(0.12) node[below,black,opacity=0.3]{$p$};
\fill[red,opacity=0.3] (13.2,3.9) circle(0.12);

\draw[ultra thick,black] (8.3,0.5) -- (9.74,2) node[midway, below right] {$\iota_{a-2}$};

\draw[ultra thick,black] (12.3,5.3) -- (13.6,6.6)  node[near end, below right] {$\iota_{b}$};
\fill[red] (12.3,5.3) circle (0.12) node[black,above left]{$m$};

\draw[ultra thick,black,opacity=0.3] (13.27,3.98) -- (14.8,5.5)  node[near start, below right] {$\iota_{b}$};
\end{scope}


\draw[thick] (8,0) rectangle (15,7);
\draw[thick,dashed](15,3)--(8,3);
\draw[thick,dashed] (15,5) -- (8,5);

\fill[white] (10.6,4.9) circle (0.12) node[black,below] {$\underline{a}$};
\fill[red] (10.6,5) circle (0.15);
\fill[white] (10.6,5) circle (0.12) ;
\fill[white] (10.6,3) circle (0.12) node[black,below,opacity=0.3] {$S(\underline{a})$};
\fill[red,opacity=0.3] (10.6,3) circle (0.12) ;
\fill[white] (12.4,2.6) circle (0.12) node[black,below right] {$(a-1)_\sigma$};
\fill[red] (12.5,2.6) circle (0.12) ;
\fill[white] (13.2,3.8) circle(0.12) node[below,black,opacity=0.3]{$p$};
\fill[red,opacity=0.3] (13.2,3.9) circle(0.12);

\draw[ultra thick,black] (8.3,0.5) -- (9.74,2) node[midway, below right] {$\iota_{a-2}$};

\draw[ultra thick,black] (11.7,5.3) -- (13,6.6)  node[near end, below right] {$\iota_{b}$};
\fill[red] (11.7,5.3) circle (0.12) node[black,above left]{$m$};

\draw[ultra thick,black,opacity=0.3] (13.27,3.98) -- (14.8,5.5)  node[near start, below right] {$\iota_{b}$};

\end{tikzpicture}
\caption[below]{Left: Suppose $S(\underline{a})$ is the $a$ element of $\sigma_{a,b}$. If $m$ is to the right of $(a-1)_\sigma$, then $\pi$ contains $\sigma_{a,b}$. Right: If $m$ is to the left of $(a-1)_\sigma$ then $(a-1)_\sigma$ is an $a-1$ associated with $\underline{a}$ in $\pi$.}\label{qrightr}
\end{figure}
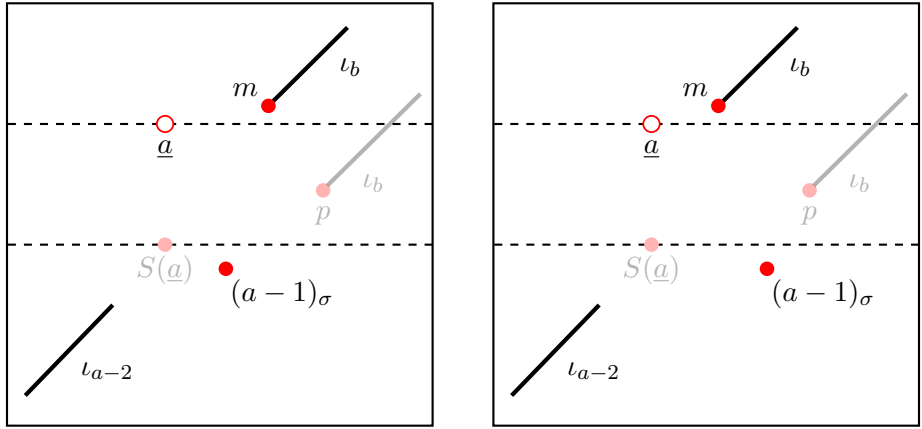

If $(a-1)_\sigma$ is to the left of $m$, then $\underline{a}$ and $(a-1)_\sigma$ would be the $a$ and $a-1$ respectively, in an occurrence of $\sigma_{a,b}$ in $\pi$ (see Figure \ref{qrightr}, left), a contradiction, since $\pi\in \Av(\sigma_{a,b})$. If $(a-1)_\sigma$ is to the right of $m$, then $(a-1)_\sigma$ is an $a-1$ associated with $\underline{a}$ in $\pi$, which contradicts the fact that $S(\underline{a})$ is above $(a-1)_\sigma$ in $S(\pi)$ (see Figure~\ref{qrightr}, right).  
 
Since $\pi\in \Av(\sigma_{a,b})$, by Corollary \ref{ignorej} we know that if $\sigma_{a,b}$ appeared in $S(\pi)$, it would contain $S(\underline{a})$. We have shown that any role $S(\underline{a})$ could play in $\sigma_{a,b}$ leads to a contradiction, so $S(\pi)\in \Av(\sigma_{a,b})$.  
\end{proof}

\begin{lemma}\label{terminates}
If $\pi\in \Av(\sigma_{a,b})$ then $S^{n-a}(\pi)\in \Av(\Pi(a-1,b+1))$.
\end{lemma}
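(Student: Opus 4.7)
The plan is to iterate $S$ and show that each non-trivial step strictly increases the value of $\underline{a}$; together with the bounds $a \leq \underline{a} \leq n$, this forces termination within $n-a$ iterations. By Lemma \ref{nosigmaab} and induction, the iterates $\pi^{(k)} := S^k(\pi)$ all lie in $\Av(\sigma_{a,b})$, so the hypothesis is preserved at every step.

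The key claim is that if $\pi^{(k+1)} \notin \Av(\Pi(a-1,b+1))$, then $\underline{a}^{(k+1)} > \underline{a}^{(k)}$. Suppose for contradiction that some value $v^* \leq \underline{a}^{(k)}$ plays role $a$ in a pattern $P$ from $\Pi(a-1,b+1)$ in $\pi^{(k+1)}$, and let $p$ denote the position of $\underline{a}^{(k)}$ in $\pi^{(k)}$. The argument splits by how $v^*$ sits relative to the rotation interval $[\underline{a-1}^{(k)}, \underline{a}^{(k)}]$. When $v^*$ lies outside this interval, $v^*$ is unchanged in value and position under $S$, so Corollary \ref{ignorej} applied to the positions of $P$ either transfers $P$ to a pattern in $\pi^{(k)}$ with $a$-role value $v^* < \underline{a}^{(k)}$ (contradicting minimality of $\underline{a}^{(k)}$), or forces $P$ to contain $p$ together with an associated $a-1$ position; a case analysis on the role of $p$ in $P$ (closely mirroring the scheme of Lemma \ref{nosigmaab}) then produces an occurrence of $\sigma_{a,b}$ in $\pi^{(k)}$. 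When $v^* \in [\underline{a-1}^{(k)}, \underline{a}^{(k)}]$, the cyclic rotation has moved $v^*$ to $p$ itself (if $v^* = \underline{a-1}^{(k)}$) or to an associated $a-1$ position (if $v^* > \underline{a-1}^{(k)}$); producing a valid role-$(a-1)$ element for $P$ in $\pi^{(k+1)}$ then forces, via Lemma \ref{intervallemma} together with $\sigma_{a,b}$-avoidance, either a smaller associated $a-1$ for $\underline{a}^{(k)}$ in $\pi^{(k)}$ or an occurrence of $\sigma_{a,b}$, each contradicting a hypothesis.

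I expect the hardest subcase to be $v^* = \underline{a-1}^{(k)}$: here one must rule out every candidate in the gap between the rightmost $\iota_{a-2}$ element and $\underline{a-1}^{(k)}$ from acting as the role-$(a-1)$ element of a pattern headed by $\underline{a-1}^{(k)}$ at $p$. The two admissible positional placements of such a candidate are excluded respectively by the minimality of $\underline{a-1}^{(k)}$ (candidate left of the $\iota_{a-2}$, which would give a smaller associated element) and by $\sigma_{a,b}$-avoidance (candidate right of $p$, which would produce $\sigma_{a,b}$ in $\pi^{(k)}$), closely paralleling the final case in the proof of Lemma \ref{nosigmaab}. With the strict increase established, the bounds $a \leq \underline{a}^{(k)} \leq n$ whenever $\pi^{(k)} \notin \Av(\Pi(a-1,b+1))$ permit at most $n-a$ non-trivial iterations, yielding $S^{n-a}(\pi) \in \Av(\Pi(a-1,b+1))$.
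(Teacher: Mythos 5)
Your strategy is exactly the paper's: show that $\underline{a}$ strictly increases at each non-trivial application of $S$ (using Lemma \ref{nosigmaab} to keep every iterate in $\Av(\sigma_{a,b})$), and analyse a hypothetical low $a$-element of $S(\pi)$ according to whether its preimage lies below, inside, or at the top of the rotated interval $[\underline{a-1},\underline{a}]$. The skeleton is right, and you correctly identify which hypotheses (minimality of $\underline{a}$, minimality of $\underline{a-1}$, $\sigma_{a,b}$-avoidance) drive most of the contradictions.

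Two concrete problems remain. First, the real content of the proof is the subcase analyses, and you assert rather than execute them (``closely mirroring the scheme of Lemma \ref{nosigmaab}'', ``a case analysis \dots then produces an occurrence of $\sigma_{a,b}$''); moreover at least one prediction is off: when $v^*<\underline{a-1}$, Corollary \ref{ignorej} forces the pattern $P$ to contain $S(\underline{a})$ inside its $\iota_b$, and the contradiction comes from substituting the $\iota_b$ that already sat northeast of $\underline{a}$ in $\pi$ --- a minimality contradiction, not an occurrence of $\sigma_{a,b}$. Second, your closing count does not follow from your stated bounds: a strictly increasing integer sequence starting at $\geqslant a$ and bounded above by $n$ can take $n-a+1$ distinct values, so these bounds only give termination after $n-a+1$ applications, not $n-a$. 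For $b\geqslant 1$ one can sharpen to $\underline{a}\leqslant n-b$ and recover $n-a$, but for $b=0$ the extra step can genuinely be needed: with $(a,b)=(3,0)$ and $\pi=2134\in\Av(132)=\Av(\sigma_{3,0})$ one computes $S(\pi)=3124$, which still contains $213\in\Pi(2,1)$, so $S^{\,n-a}(\pi)=S^{1}(\pi)\notin\Av(\Pi(2,1))$; only $S^{2}(\pi)=4123$ avoids $\Pi(2,1)$. The paper's own proof makes the same counting step, so this is a shared off-by-one in the exponent rather than a defect of your approach; replacing $n-a$ by $n$ throughout leaves the injectivity and Wilf-equivalence arguments intact, but as stated the bound needs repair.
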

\begin{proof}
If $\pi\in \Av(\Pi(a-1,b+1))$ or if $S(\pi)\in \Av(\Pi(a-1,b+1))$ the claim is trivially satisfied, so suppose that both $\pi$ and $S(\pi)$ contain a pattern from $\Pi(a-1,b+1)$. We will show that the $\underline{a}$ element in $\pi$ is less than the $\underline{a}$ element in $S(\pi)$ (denoted $\underline{a}'$), by showing that the preimage of $\underline{a}'$ is above $\underline{a}$ in $\pi$. Noting that by Lemma~\ref{nosigmaab}, $S^k(\pi)\in \Av(\sigma_{a,b})$ for all $k$, this implies that we reach a fixed point (i.e., an element of $\Av(\Pi(a-1,b+1))$) after at most $n-a$ iterations of $S$. Let $\rho$ denote a pattern from $\Pi(a-1,b+1)$ in $S(\pi)$ in which $\underline{a}'$ acts as the $a$. Note that showing that the preimage of $\underline{a}'$ is above $\underline{a}$ in $\pi$ amounts to showing that if $q\leqslant  \underline{a}$ in $\pi$ then $S(q)$ may not act as the $a$ element of a pattern from $\Pi(a-1,b+1)$ in $S(\pi)$. 

First, consider elements $q<\underline{a-1}$ in $\pi$. If the image $S(q)$ of such an element was the $a$ in some pattern $\rho\in \Pi(a-1,b+1)$ in $S(\pi)$, then since $\rho$ must contain $S(\underline{a})$ by Corollary \ref{ignorej}, and since $S(q)<S(\underline{a})$, $S(\underline{a})$ must be part of the increasing pattern $\iota_b$, northeast of $S(q)$. However, there was already an $\iota_b$ northeast of $\underline{a}$ in $\pi$, meaning $q$ would be the $a$ element in some pattern from $\Pi(a-1,b+1)$ in $\pi$, a contradiction since $q<\underline{a}$ by assumption. 

Now we will show that the element $S(\underline{a})$ cannot act as $\underline{a}'$ in a pattern $\rho\in \Pi(a-1,b+1)$ in $S(\pi)$. Suppose it did. Then there must be an increasing subpermutation $\iota_{a-2}$ and an $a-1$ element below $S(\underline{a})$. Everything below $S(\underline{a})$ is in the same order relative to $S(\underline{a})$ as it was to $\underline{a}$ in $\pi$. The part of $\rho$ above $S(\underline{a})$ is an increasing subpermutation $\iota_{b}$ with least element $r$, such that the $a-1$ is not between $S(\underline{a})$ and $r$. Now, either this increasing subpermutation $\iota_{b}$ was above $\underline{a}$ in $\pi$, in which case the preimage of the pattern $\rho$ is also $\rho$, or the pre-image of $r$ was an $a-1$ for $\underline{a}$ and there is some other $\iota_{b}$ above $\underline{a}$ in $\pi$, with least element right of $\underline{a}$ and left of $r$. In either case, the $a-1$ element from $\rho$ in $S(\pi)$ would be an $a-1$ for $\underline{a}$ in $\pi$, and its image would be above $S(\underline{a})$ in $S(\pi)$, a contradiction. 

Lastly, consider the case when $q$ is an $a-1$ associated with $\underline{a}$ in $\pi$. We will show that $S(q)\neq \underline{a}'$. If the image of $q$ under $S$ was $\underline{a}'$ in some pattern $\rho$ from $\Pi(a-1,b+1)$, then by Corollary \ref{ignorej}, $\rho$ must involve $S(\underline{a})$.  

Suppose $q$ is left of $\underline{a}$ in $\pi$. Since $S(\underline{a})$ is below $S(q)$ in $S(\pi)$, $S(\underline{a})$ must be the $a-1$ element in $\rho$, as this is the only decreasing pair in any $\rho\in \Pi(a-1,b+1)$. We know there is an increasing subpermutation $\iota_{b}$ northeast of $\underline{a}$ and $q$ in $\pi$, due to their roles in some $\Pi(a-1,b+1)$ pattern in $\pi$. If $S(q)$ is the $a$ element in $\rho$ there is an increasing subpermutation $\iota_{a-2}$ southwest of $S(q)$ in $S(\pi)$, and therefore southwest of $q$ in $\pi$. This $\iota_{a-2}$ is also below $S(\underline{a})$, the $a-1$ element of $\rho$. This implies $S(\pi)$ contains $\sigma_{a,b}$. By Lemma \ref{nosigmaab} we would then have $\pi\notin \Av(\sigma_{a,b})$, a contradiction. 

Suppose $q$ is to the right of $\underline{a}$ and $S(q) = \underline{a}'$. Again, the pattern $\rho$ in which $S(q)$ is $\underline{a}'$ must contain $S(\underline{a})$. As $S(\underline{a})< S(q)$, $S(\underline{a})$ must either be an $a-1$ element in $\rho$, or part of the increasing pattern $\iota_{a-2}$. Since there is already an $\iota_{a-2}$ with maximal element $h$ southwest of $S(\underline{a})$, if $S(\underline{a})$ was part of the $\iota_{a-2}$, we would have the same $\rho$ with $a$ element $q$ in $\pi$. Therefore we suppose that $S(\underline{a})$ plays the role of an $a-1$ for the pattern $\rho$ in $S(\pi)$.

If $S(q) = \underline{a}'$ and $S(\underline{a})$ is an $a-1$ associated with $\underline{a}'$ in $\rho\in \Pi(a-1,b+1)$, then since $S(\underline{a})$ is left of $S(q)$, there is an increasing subpermutation below $S(\underline{a})$ and $S(q)$ whose maximal element $g$ is right of $\underline{a}$ and left of $q$. Now then, if $h<g$ (see Figure \ref{fig: a-1 in pi}, left), $S(\pi)$ contains $\sigma_{a,b}$ with $S(\underline{a})$ acting as $a$ and $g$ as $a-1$. If $h>g$ (see Figure \ref{fig: a-1 in pi}, right), then $q$ is a lower $a$ than $\underline{a}$ in $\pi$, with $h$ playing the role of $a-1$. Both of these situations contradict our assumptions on $\pi$.  
\end{proof}

\begin{figure}[h]
\centering
\begin{tikzpicture}[scale=0.9]
\draw[thick] (0,0) rectangle (6,6);
\draw[thick,dashed] (6,3)--(0,3);
\draw[thick,dashed] (6,4) -- (0,4);

\fill[white] (2.7,3.9) circle (0.12) node[black,below,opacity=0.3] {$\underline{a}$};
\fill[red,opacity=0.3] (2.7,4) circle (0.15);
\fill[white] (2.7,4) circle (0.12);
\fill[white] (2.7,3) circle (0.12) node[black,below] {$S(\underline{a})$};
\fill[red] (2.7,3) circle (0.12);

\fill[white] (4.05,3.4) circle (0.12) node[black,right,opacity=0.3] {$q$};
\fill[red,opacity=0.3] (4,3.5) circle (0.12);

\draw[ultra thick,black] (0.4,0.7) -- (1.73,2);
\fill[white] (0.7,1.7) node[black] {$\iota_{a-2}$};

\fill[white] (1.75,2) circle (0.12) node[below right, black] {$h$};
\fill[red] (1.73,2) circle (0.12);

\draw[ultra thick,black,opacity=0.3] (1.9,0.9) -- (3.2,2.2) node[midway, below right] {$\iota_{a-2}$};

\fill[white] (3.25,2.2) circle (0.12) node[below right, black] {$g$};
\fill[red] (3.2,2.2) circle (0.12);

\draw[ultra thick,black,opacity=0.3] (3.5,4.4) -- (4.9,5.7) node[midway,opacity=0.3, above left] {$\iota_{b}$};

\draw[ultra thick,black] (4.5,4.4) -- (5.9,5.7) node[near start, below right] {$\iota_{b}$};

\draw[thick] (7,0) rectangle (13,6);
\draw[thick,dashed](13,3)--(7,3);
\draw[thick,dashed] (13,4) -- (7,4);

\fill[white] (9.7,3.9) circle (0.12) node[black,below,opacity=0.3] {$\underline{a}$};
\fill[red,opacity=0.3] (9.7,4) circle (0.15);
\fill[white] (9.7,4) circle (0.12);
\fill[white] (9.7,3) circle (0.12) node[black,below,opacity=0.3] {$S(\underline{a})$};
\fill[red,opacity=0.3] (9.7,3) circle (0.12);
\fill[white] (11.05,3.4) circle (0.12) node[black,right] {$q$};
\fill[red] (11,3.5) circle (0.12);

\draw[ultra thick,black,opacity=0.3] (7.5,0.8) -- (8.93,2.2);
\fill[white] (7.8,1.8) node[black] {$\iota_{a-2}$};

\fill[white] (8.95,2.2) circle (0.12) node[below right, black] {$h$};
\fill[red] (8.93,2.2) circle (0.12);

\draw[ultra thick,black] (8.7,0.4) -- (10.2,1.9) node[midway, below right] {$\iota_{a-2}$};

\fill[white] (10.25,1.9) circle (0.12) node[below right, black] {$g$};
\fill[red] (10.2,1.9) circle (0.12);

\draw[ultra thick,black,opacity=0.3] (10.5,4.4) -- (11.9,5.7) node[midway, above left] {$\iota_{b}$};

\draw[ultra thick,black] (11.5,4.4) -- (12.9,5.7) node[near start, below right] {$\iota_{b}$};

\end{tikzpicture}
\caption[below]{Left: If $q$ acts as the $a$ in a pattern from $\Pi(a-1,b+1)$ in $S(\pi)$, and $h<g$, then $S(\pi)$ contains $\sigma_{a,b}$.
Right: If $h>g$, then $q$ is a lower $a$ than $\underline{a}$ in $\pi$.}\label{fig: a-1 in pi}
\end{figure}
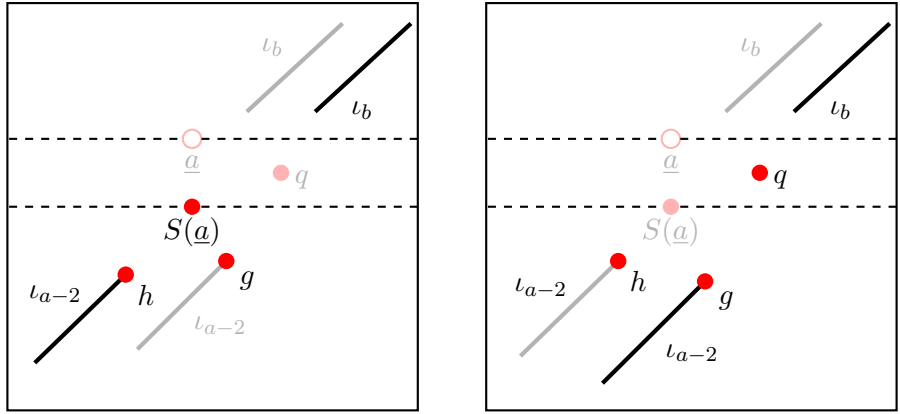

The following corollary is immediate from Lemma \ref{terminates}:
\begin{corollary}\label{cor: image of S n-a}
If $\pi\in \Av(\Pi(a,b))$, then $S^{n-a}(\pi)\in \Av(\Pi(a-1,b+1))$.
\end{corollary}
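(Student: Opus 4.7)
The plan is to observe that this is a direct consequence of Lemma \ref{terminates}, via the fact that the single-pattern avoidance class $\Av(\sigma_{a,b})$ contains the set-avoidance class $\Av(\Pi(a,b))$. By the definition of $\sigma_{a,b}$ given just above Lemma \ref{nosigmaab}, the permutation $\sigma_{a,b}$ lies in the intersection $\Pi(a,b)\cap \Pi(a-1,b+1)$. In particular, $\sigma_{a,b}\in \Pi(a,b)$.

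First I would note that any permutation $\pi$ avoiding a set of patterns $\Pi$ must avoid each individual pattern in $\Pi$, so $\Av(\Pi(a,b))\subseteq \Av(\sigma_{a,b})$. Given the hypothesis $\pi\in \Av(\Pi(a,b))$, this inclusion yields $\pi\in \Av(\sigma_{a,b})$.

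Then I would simply apply Lemma \ref{terminates} to $\pi$ to conclude that $S^{n-a}(\pi)\in \Av(\Pi(a-1,b+1))$, as required. There is no real obstacle here, since the substantive work has already been carried out in Lemma \ref{terminates}; the role of the corollary is just to repackage that lemma for the hypothesis that will actually be used in the sequel, namely avoidance of the full set $\Pi(a,b)$ rather than the single pattern $\sigma_{a,b}$.
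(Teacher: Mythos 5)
Your proposal is correct and matches the paper's intent exactly: the paper declares the corollary ``immediate from Lemma~\ref{terminates}'', and the implicit justification is precisely your observation that $\sigma_{a,b}\in\Pi(a,b)$ gives $\Av(\Pi(a,b))\subseteq\Av(\sigma_{a,b})$, after which Lemma~\ref{terminates} applies directly. You have simply made explicit the one-line inclusion the authors left unstated.
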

Note that by definition the map $S$ introduces patterns from $\Pi(a,b)$ as it removes patterns from $\Pi(a-1,b+1)$. The next lemma shows that although $S$ introduces patterns from $\Pi(a,b)$, they are introduced in a specific manner. 

\begin{lemma}\label{Lownewj}
If an element $t$ in $\pi$ is not the $a$ element in any pattern from $\Pi(a,b)$, but $S(t)$ is the $a$ element in some pattern from $\Pi(a,b)$ in $S(\pi)$, then $t$ is either $\underline{a}$ or an $a-1$ associated with $\underline{a}$ in $\pi$. 
\end{lemma}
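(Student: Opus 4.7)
My plan is to prove the contrapositive. Assume $t \neq \underline{a}$ and $t$ is not an $a-1$ associated with $\underline{a}$, and assume $S(t)$ is the $a$-element of some $\rho \in \Pi(a,b)$ in $S(\pi)$; I will produce a $\Pi(a,b)$ pattern in $\pi$ with $t$ as its $a$-element. If $\pi \in \Av(\Pi(a-1,b+1))$ then $S$ is the identity and the conclusion is immediate. Otherwise $\underline{a}$ and its associated $a-1$ elements are well-defined, and by Lemma~\ref{intervallemma} the latter have values exactly $[\underline{a-1},\underline{a}-1]$. Hence $t\notin[\underline{a-1},\underline{a}]$, so $S(t)=t$ and either $t<\underline{a-1}$ or $t>\underline{a}$.

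Let $P_t$ and $P_*$ denote the positions of $t$ and $\underline{a}$ in $\pi$, and let $\rho$ use positions $i_1<\cdots<i_{a+b}$ in $S(\pi)$ with $t$ at $i_j$, $j\neq a$. By Corollary~\ref{ignorej}, if $\{i_1,\ldots,i_{a+b}\}$ does not contain both $P_*$ and some associated $a-1$ position, then the subpermutation at these positions agrees in $\pi$ and $S(\pi)$, so $t$ is the $a$-element of $\rho$ in $\pi$, a contradiction. So $\rho$ uses $P_*$ (with $S(\pi)_{P_*}=\underline{a-1}$) and some associated $a-1$ position $P_Q$ (with $S(\pi)_{P_Q}=Q+1$ and $\pi_{P_Q}=Q$). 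When $t<\underline{a-1}$ both $\underline{a-1}$ and $Q+1$ exceed $t$, so they are upper elements of $\rho$; when $t>\underline{a}$ both fall below $t$, so they are lower elements. In either case the non-$t$ portion of $\rho$ is increasing in position, and $\underline{a-1}<Q+1$ forces $P_*<P_Q$, placing $Q$ to the right of $\underline{a}$ in $\pi$.

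To produce the desired pattern, draw on the $\iota_{a-2}$ southwest of $\underline{a}$ and the strictly northeast $\iota_b$ of $\underline{a}$ in $\pi$ supplied by $\underline{a}$'s role in a $\Pi(a-1,b+1)$ pattern. In the case $t<\underline{a-1}$, the lower elements of $\rho$ have values below $\underline{a-1}$, so are fixed by $S$ and give $\iota_{a-1}$ below $t$; for $\iota_b$ above $t$ I take $\{\underline{a}\}$ together with enough elements of the northeast $\iota_b$ when $P_*<P_t$, and the upper-left elements of $\rho$ (whose $S(\pi)$-values lie strictly below $\underline{a-1}$ and hence are unchanged) together with enough elements of the northeast $\iota_b$ when $P_*>P_t$, which forces $j>a$. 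In the case $t>\underline{a}$, the upper elements of $\rho$ are fixed by $S$ and give $\iota_b$ above $t$; for $\iota_{a-1}$ I take $\iota_{a-2}\cup\{\underline{a}\}$, except when $P_*<P_t$ and $j<a$, in which case I use $\iota_{a-2}$ together with the lower-right elements of $\rho$ (whose $S$-preimages remain strictly increasing since $\underline{a-1}$ does not appear among their $S(\pi)$-values). In each subcase a position-value comparison near $P_t$ shows the resulting $(a+b)$-pattern contains a descent, so it is not $\iota_{a+b}$ and lies in $\Pi(a,b)$.

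The main obstacle is verifying the monotonicity of the substituted subsequences in $\pi$. The map $S$ sends the sorted block $\underline{a-1}<\underline{a-1}+1<\cdots<\underline{a}$ in $S(\pi)$ back to $\underline{a},\underline{a-1},\ldots,\underline{a}-1$ in $\pi$, reversing the order of $\underline{a}$ relative to the associated $a-1$ elements; this is exactly what prevents $\rho$'s positions from transferring naively to $\pi$ once they contain $P_*$ together with some $P_Q$. Exchanging the offending positions for pieces of the structure around $\underline{a}$ repairs the monotonicity, and the disjointness of the substituted pieces from the retained part of $\rho$ (checked by comparing the relevant value ranges) guarantees that the constructed sequence genuinely has $a+b$ distinct elements.
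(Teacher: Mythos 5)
Your proof is correct and takes essentially the same route as the paper's: Corollary~\ref{ignorej} forces $\rho$ to use the positions of $\underline{a}$ and of some associated $a-1$, whose relative order $S$ reverses, and you repair the resulting non-monotonicity by substituting the $\iota_{a-2}$ and the northeast $\iota_b$ supplied by $\underline{a}$'s own $\Pi(a-1,b+1)$ pattern --- exactly the paper's two replacements (one for $t>\underline{a}$, one for $t<\underline{a-1}$), just split into more explicit subcases. Two cosmetic points: in the subcase $t>\underline{a}$, $P_*<P_t$, $j<a$, the set ``$\iota_{a-2}$ together with the lower-right elements'' may exceed $a-1$ elements, so one should take an $(a-1)$-element increasing subset retaining at least one lower-right element; and $P_*>P_t$ does not by itself force $j>a$, although your recipe still goes through there with an empty collection of upper-left elements, the descent then coming from the retained part of $\iota_{a-1}$ east of $t$.
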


\begin{proof}
Note the assumptions of the lemma imply that $\pi\neq S(\pi)$, and therefore there must be some pattern from $\Pi(a-1,b+1)$ in $\pi$. 
Suppose the claim is false, i.e., suppose $t$ does not act as an $a$ in some pattern from $\Pi(a,b)$, $S(t)$ does, and $t\notin[\underline{a-1},\underline{a}]$. 
Let $\rho$ denote a pattern from $\Pi(a,b)$ in $S(\pi)$ for which $S(t)$ is the $a$ element. Since $t$ does not act as the $a$ in an occurrence of the pattern $\rho$ in $\pi$, by Corollary \ref{ignorej}, $\rho$ involves the images under $S$ of $\underline{a}$, and some associated $a-1$ element, $q$. 

Recall that patterns from $\Pi(a,b)$ consist of an $a$ element, an increasing subpermutation $\iota_{a-1}$ below the $a$, and an increasing subpermutation $\iota_{b}$ above the $a$, such that the largest element of the $\iota_{a-1}$ is left of the smallest element of the $\iota_{b}$, and both are on the same side of $a$. 
If $t>\underline{a}$, (see Figure \ref{fig9}, left) then $S(\underline{a})$ and $S(q)$ are part of the increasing subpermutation $\iota_{a-1}$ below $S(t)$. Since $S(q)>S(\underline{a})$, this means that $q$ is right of $\underline{a}$. Since $q$ was an $a-1$ in a pattern from $\Pi(a-1,b+1)$ in $\pi$, and it was right of $\underline{a}$, it follows that $q$ was northeast of an increasing subpermutation $\iota_{a-2}$. Therefore, $q$ was part of an increasing subpermutation $\iota_{a-1}$ in $\pi$. This means we can find an occurrence of the pattern $\rho$ in $\pi$ by replacing the $\iota_{a-1}$ containing $S(\underline{a})$ and $S(q)$ with this one containing $q$, and keeping all other elements the same. This contradicts our assumption that $t$ is not the $a$ in a pattern from $\Pi(a,b)$ in $\pi$.

\begin{figure}[h]
\centering
\begin{tikzpicture}[scale=0.9]

\draw[thick] (0,0) rectangle (6,6);
\draw[thick,dashed](6,2.5)--(0,2.5);
\draw[thick,dashed] (6,3.6) -- (0,3.6);

\fill[white] (5.3,4) circle (0.13) node[black, right] {$t$};
\fill[red] (5.2,4) circle (0.12);

\draw[ultra thick,black] (4,4.4) -- (5.1,5.5) node[midway, above left] {$\iota_{b}$};

\fill[white] (2.5,3.7) node[black, above left] {$\underline{a}$};
\fill[red] (2.5,3.6) circle (0.15);

\fill[white] (3.5,2) circle (0.05) node[black,left,opacity=0.3] {$S(\underline{a})$};

\fill[white] (2.5,3.6) circle (0.12);

\fill[white] (3.1,3) circle (0.13) node[black, right,opacity=0.3] {$q$};

\draw[ultra thick,black] (0.9,0.2) -- (2.2,1.5) node[midway, below right] {$\iota_{a-2}$};

\draw[ultra thick,black,opacity=0.3] (1.5,1.5) -- (3.2,3.2);
\fill[white] (1.3,2) node[black,opacity=0.3] {$\iota_{a-1}$};
\fill[white] (3,3) circle (0.12);
\fill[white] (2.5,2.5) circle (0.12);

\fill[red,opacity=0.3] (3,3) circle (0.12);
\fill[red,opacity=0.3] (2.5,2.5) circle (0.12);

\draw[thick] (7,0) rectangle (13,6);
\draw[thick,dashed](13,2.5)--(7,2.5);
\draw[thick,dashed] (13,3.6) -- (7,3.6);

\fill[white] (8.4,2) circle (0.13) node[black, left] {$t$};
\fill[red] (8.5,2) circle (0.12) ;

\draw[ultra thick,black,opacity=0.3] (10,2.5) -- (11.6,4.1) node[at end, above] {$\iota_{b}$};

\draw[ultra thick,black] (10,3.6) -- (11.6,5.2) node[midway, above left] {$\iota_{b}$};

\fill[white] (9.9,3.5) node[black, below] {$\underline{a}$};
\fill[red] (10,3.6) circle (0.15);

\fill[white] (10,2.5) circle (0.14);
\fill[white] (10,2.4) circle (0.15) node[black,below right,opacity=0.3] {$S(\underline{a})$};

\fill[white] (10,3.6) circle (0.12);

\fill[red,opacity=0.3] (10,2.5) circle (0.12);

\fill[white] (10.8,3.1) circle (0.03) node[black,right,opacity=0.3] {$q$};
\fill[white] (10.7,3.2) circle (0.12) ;
\fill[red,opacity=0.3] (10.7,3.2) circle (0.12) ;

\draw[ultra thick,black] (7.7,0.4) -- (9,1.7) node[midway, below right] {$\iota_{a-1}$};

\end{tikzpicture}
\caption[below]{Left: If $t>\underline{a}$ and $S(t)$ is the $a$ from a $\Pi(a,b)$ pattern, so is $t$. Right:
If $t<S(\underline{a})$ and $S(t)$ is the $a$ from a $\Pi(a,b)$ pattern, so is $t$.}\label{fig9}
\end{figure}
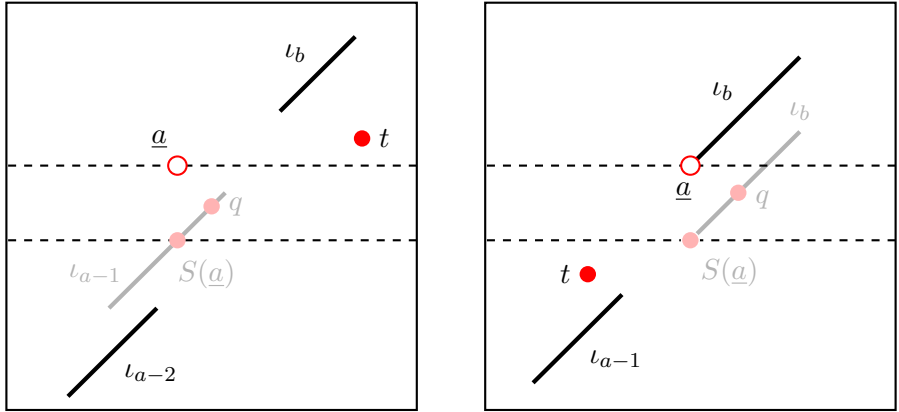

If $t<S(\underline{a})$ (see Figure \ref{fig9}, right) then there is some increasing subpermutation $\iota_{b}$ above $t$ involving $S(\underline{a})$ and $S(q)$ for some $q$ that was an $a-1$ associated with $\underline{a}$ in $\pi$. We know that $\underline{a}$ was part of an increasing subpermutation $\iota_{b}$ in $\pi$. Replacing the portion of the $\iota_{b}$ in $\rho$ that is above $S(\underline{a})$ in $S(\pi)$ with an equivalent sized portion of the $\iota_{b}$ in $\pi$ containing $\underline{a}$, we find an occurrence of $\rho$ in $\pi$ with $a$ element $t$, again contradicting our assumption on $t$. 
\end{proof}

Lemma \ref{Lownewj} implies that if we iteratively apply $S$ to a permutation $\pi\in \Av(\Pi(a,b))$, as each iteration introduces more patterns from $\Pi(a,b)$, the $a$ element in these introduced patterns in $S^k(\pi)$ is not greater than the $\underline{a}$ element of $S^{k-1}(\pi)$. We will now use this result to prove that $S^{n-a}$, restricted to the domain $\Av_n(\Pi(a,b))$, is injective.

\begin{theorem}\label{injective}
The map $S^{n-a}:\Av_n(\Pi(a,b)) \rightarrow \Av_n(\Pi(a-1,b+1))$ is injective.
\end{theorem}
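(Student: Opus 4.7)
Suppose for contradiction that $\pi, \pi' \in \Av_n(\Pi(a,b))$ are distinct but $S^{n-a}(\pi) = S^{n-a}(\pi')$. Let $m$ be the largest index with $S^m(\pi) \neq S^m(\pi')$, necessarily $m < n-a$, and set $\mu = S^m(\pi)$, $\nu = S^m(\pi')$, and $\rho = S(\mu) = S(\nu)$. Both $\mu$ and $\nu$ lie in $\Av(\sigma_{a,b})$ by Lemma \ref{nosigmaab}. The plan is to recover $\mu$ (and likewise $\nu$) canonically from $\rho$, which forces $\mu = \nu$, the desired contradiction.

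The central claim is that the value $A := \underline{a}_\mu$ is intrinsic to $\rho$: it equals the maximum value of an $a$-element over all $\Pi(a,b)$ patterns in $\rho$. For the upper bound, Lemma \ref{Lownewj} applied inductively along $\pi, S(\pi), \ldots, S^m(\pi)$, combined with Lemma \ref{terminates} (which forces strict increase of $\underline{a}$ under each non-trivial application of $S$), shows every $a$-element in $\rho$ has value at most $\underline{a}_\mu$. For attainment, Lemma \ref{intervallemma} guarantees that $A-1$ is the largest element associated with $\underline{a}_\mu$, so there is a $\Pi(a-1,b+1)$ pattern $\omega$ in $\mu$ with $\underline{a}_\mu$ as $a$-role and $A-1$ as $a-1$-role. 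A case split on whether the position of $A-1$ within $\omega$ lies to the left of the maximum of the $\iota_{a-2}$-part or to the right of $\underline{a}_\mu$ (the intermediate arrangement forces $\omega = \sigma_{a,b}$, excluded since $\mu \in \Av(\sigma_{a,b})$) shows that $S$ transforms $\omega$ into a $\Pi(a,b)$ pattern in $\rho$ whose $a$-element sits at the old position $P^*$ of $A-1$ with $\rho$-value $A$, and whose $a-1$ element sits at the old position $P$ of $\underline{a}_\mu$ with $\rho$-value $A' := \underline{a-1}_\mu$.

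Having identified $A$, let $P^*$ denote the position in $\rho$ with value $A$; one then identifies $P$ as the position of the $a-1$ element in some $\Pi(a,b)$ pattern of $\rho$ with $a$-element at $P^*$, and sets $A' = \rho_P$. With $A$ and $A'$ known, the rotated positions under $S$ are exactly the positions in $\rho$ whose values lie in $[A', A]$, and reversing the cyclic rotation recovers $\mu$ uniquely. Applying the identical procedure to $\nu$ recovers $\nu$ from the same $\rho$, giving $\mu = \nu$.

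The main technical obstacle is establishing well-definedness of the $P$-identification: any two $\Pi(a,b)$ patterns in $\rho$ with $a$-element at $P^*$ must place the $a-1$ element at the same position $P$. This requires a preimage analysis in the spirit of Lemma \ref{nosigmaab}, showing the $a-1$ element's $\rho$-value in any such pattern must equal $A'$; otherwise, pulling back through $S$ would yield a configuration in $\mu$ contradicting either the minimality of $\underline{a}_\mu$ or the interval structure of Lemma \ref{intervallemma}.
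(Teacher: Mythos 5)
Your reduction to the last index $m$ at which the two orbits disagree, and your identification of $A=\underline{a}_\mu$ as the maximum value of an $a$-element over all $\Pi(a,b)$ patterns in $\rho$ (via Lemma~\ref{Lownewj}, the strict increase of $\underline{a}$ from Lemma~\ref{terminates}, and an attainment argument), is sound and corresponds to the first half of the paper's proof, which shows $\underline{a}_{\pi'}$ and $\underline{a}_{\tau'}$ have the same value. The gap is in the second half. You defer, as ``the main technical obstacle,'' the claim that every $\Pi(a,b)$ pattern of $\rho$ with $a$-element at $P^*$ places its $a-1$ element at the same position $P$ with value $A'$. This is not a routine verification: it is precisely the hard half of the injectivity argument, and as stated it is stronger than what is needed and of doubtful truth --- nothing obviously prevents $\rho$ from containing two copies of $\iota_{a-1}$ below $A$ with different top elements, each completing a $\Pi(a,b)$ pattern with the same $a$-element. (For $(a,b)=(3,0)$ the configuration $4123$ has three $312$ patterns sharing the ``$3$'' but with two distinct ``$2$'' elements; excluding such configurations from the image of the orbit is exactly the work you have not done.) There is also an unhandled boundary case: if one of $\mu,\nu$ is a fixed point of $S$, then $\underline{a}_\mu$ is undefined and your recovery procedure does not apply; the paper dispatches the analogous degenerate cases separately at the outset.

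The paper avoids any intrinsic characterization of $A'$ by exploiting that $\mu$ and $\nu$ map to the \emph{same} $\rho$: it first shows the \emph{positions} of $\underline{a}_\mu$ and $\underline{a}_\nu$ coincide (if, say, the position of $\underline{a}_\mu$ lay strictly to the right, the element of $\nu$ occupying that position in the common image would supply a $\Pi(a,b)$ pattern in $\nu$ with $a$-element $\underline{a}_\nu$, contradicting Lemma~\ref{Lownewj}), and only then reads the value $\underline{a-1}$ off the common image at that shared position, so that no well-definedness over all patterns is required. You should either replace your $P$-identification with an argument of this kind or actually prove your well-definedness claim. A minor further slip: the ``intermediate arrangement'' in your case split yields $\iota_{a+b}$, not $\sigma_{a,b}$; the exclusion of $\sigma_{a,b}$ is needed in the sub-case where $A-1$ sits immediately to the right of $\underline{a}_\mu$.
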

\begin{proof}
Let $\pi,\tau\in \Av_n(\Pi(a,b))$ such that $\pi\neq \tau$ and suppose that $S^{n-a}(\pi) = S^{n-a}(\tau)$. By Corollary~\ref{cor: image of S n-a}, $S^{n-a}(\pi)$ and $S^{n-a}(\tau)$ are in $ \Av_n(\Pi(a-1,b+1))$. If $\pi$ and $\tau$ are both in $\Av_n(\Pi(a-1,b+1))$, then $\pi = S^{n-a}(\pi) = S^{n-a}(\tau) = \tau$, contradicting $\pi\neq \tau$. Similarly, suppose that $\pi\in \Av_n(\Pi(a-1,b+1))$ and $\tau \notin \Av_n(\Pi(a-1,b+1))$ (or vice versa). Then we have $\pi=S^{n-a}(\tau)$. This is a contradiction, since $\pi\in \Av_n(\Pi(a,b))$, and $S^{n-a}(\tau)$ contains patterns from $\Pi(a,b)$, as these are created at each iteration of the map $S$.

Suppose neither $\pi$ nor $\tau$ are in $\Av_n(\Pi(a-1,b+1))$. This means that there are some $0\leqslant  c,d< n-a$ such that 
\[S^c(\pi)\neq S^d(\tau)\text{ and }S^{c+1}(\pi) = S^{d+1}(\tau).\]

For simplicity we will write $S^c(\pi)$ as $\pi'$ and $S^d(\tau)$ as $\tau'$. Let $\underline{a}_{\pi'}$ and $\underline{a-1}_{\pi'}$ denote the $\underline{a}$ and $\underline{a-1}$ elements in $\pi'$, and let $\underline{a}_{\tau'}$ and $\underline{a-1}_{\tau'}$ denote the $\underline{a}$ and $\underline{a-1}$ elements in $\tau'$.

First we will show that $\underline{a}_{\pi'}$ and $\underline{a}_{\tau'}$ must have the same value. Suppose without loss of generality that $\underline{a}_{\pi'}$ is greater than $\underline{a}_{\tau'}$, as shown in Figure \ref{Case1}.

\begin{figure}[h]
\centering
\begin{tikzpicture}[scale=0.9]

\draw[thick] (0,0) rectangle (6,6);

\fill[white] (1.3,4) circle (0.12) node[black, above right]{$\underline{a}_{\pi'}$};

\fill[white] (12,3.4) circle (0.12) node[black, below right]{$\underline{a}_{\tau'}$};
\draw[thick](6,2.8)--(0,2.8);
\draw[thick] (6,4) -- (0,4);

\fill[white] (3,6.5) circle (0.01) node[black] {$\pi'$};
\fill[white] (10,6.5) circle (0.01) node[black] {$\tau'$};

\draw[thick, dashed](6,2.2)--(0,2.2);

\draw[thick] (7,0) rectangle (13,6);
\draw[thick](13,3.4)--(7,3.4);
\draw[thick](13,2.2)--(7,2.2);
\draw[thick,dashed] (13,4) -- (7,4);

\fill[red] (10,2.2) circle (0.12) node[black, above left]{$\underline{a-1}_{\tau'}$};

\fill[red] (3.5,2.8) circle (0.12) node[black, above right]{$\underline{a-1}_{\pi'}$};

\draw[ultra thick] (2.3,1.5) -- (3.3,2.5);
\fill[white] (3.3,1.6) circle (0.01) node[black]{$\iota_{a-2}$};

\draw[ultra thick] (3.7,4.2) -- (4.7,5.2);
\fill[white] (4,5) circle (0.01) node[black]{$\iota_{b}$};

\draw[ultra thick] (8.8,0.9) -- (9.9,2);
\fill[white] (9.9,1.5) circle (0.01) node[black, below]{$\iota_{a-2}$};

\draw[ultra thick] (10.2,3.6) -- (11.2,4.6);
\fill[white] (10.6,4.6) circle (0.01) node[black]{$\iota_{b}$};

\fill[red] (1.3,4) circle (0.12);
\fill[red] (12,3.4) circle (0.12);

\end{tikzpicture}
\caption[below]{The permutation diagrams of  $\pi'\neq \tau'$, supposing that $\underline{a}_{\pi'}>\underline{a}_{\tau'}$.}\label{Case1}
\end{figure}
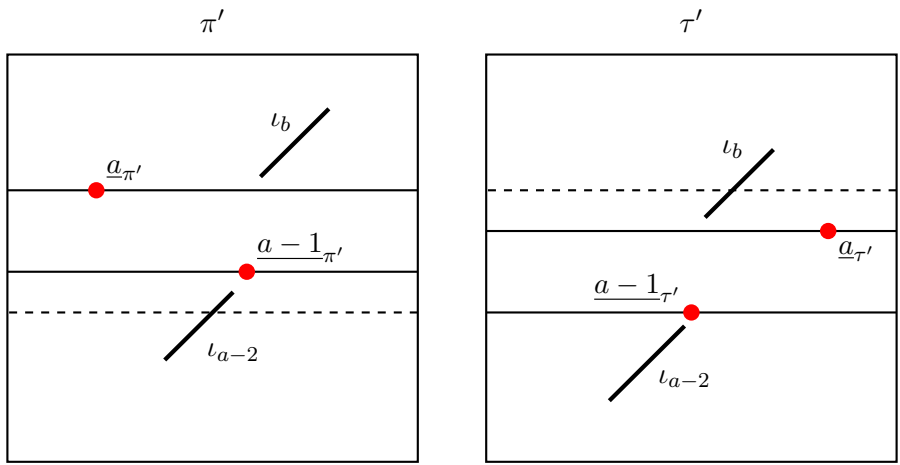

The map $S$ sends $a-1$ elements from patterns in $\Pi(a-1,b+1)$ (excluding $\sigma_{a,b}$) to $a$ elements in patterns from $\Pi(a,b)$. Therefore, $S$ introduces a pattern from $\Pi(a,b)$ in $S(\pi')$ that has an $a$ element with value $\underline{a}_{\pi'}>\underline{a}_{\tau'}$. By Lemma~\ref{Lownewj}, $S(\tau')$ cannot have a pattern from $\Pi(a,b)$ with $a$ element greater than $\underline{a}_{\tau'}$. This contradicts our assumption that $S(\pi') = S(\tau')$, so $\underline{a}_{\pi'}$ and $\underline{a}_{\tau'}$ must have the same value.

Suppose that $\underline{a}_{\pi'}$ and $\underline{a}_{\tau'}$ are not in the same position. Without loss of generality, suppose that $\underline{a}_{\pi'}$ occupies a position in $\pi'$ that is to the right of the position of $\underline{a}_{\tau'}$ in $\tau'$. Since $S(\pi')=S(\tau')$, there must be some element in $\tau'$ that has value less than $\underline{a}_{\tau'}$, and occupies the same position as that which $S(\underline{a}_\pi')$ holds in $S(\pi')$. Let us denote this element $\pi_{a}$. Since $S(\tau')=S(\pi')$, $\pi_a$ must fill essentially the same role in $\tau'$ as the image of $\underline{a}_{\pi'}$ does in $S(\pi')$. In particular, we know that $\pi_a$ is northeast of an $\iota_{a-2}$, and also that there is an $\iota_b$ northeast of $\pi_{a}$, that is also northeast of $\underline{a}_{\tau'}$ since $\underline{a}_{\tau'}$ and $\underline{a}_{\pi'}$ have the same value. As can be seen in Figure \ref{Case2}, this gives a pattern from $\Pi(a,b)$ with $a$ element $\underline{a}_{\tau'}$, contradicting Lemma \ref{Lownewj}.

Finally we consider when $\underline{a}_{\pi'}$ and $\underline{a}_{\tau'}$ have the same value and position in $\pi'$ and $\tau'$ respectively. Since $S(\pi')=S(\tau')$ and $\underline{a}_{\pi'}=\underline{a}_{\tau'}$, it must be the case that $S(\underline{a}_{\pi'})=S(\underline{a}_{\tau'})$, from which it follows that $\pi'=\tau'$, contradicting our initial assumption.\end{proof}

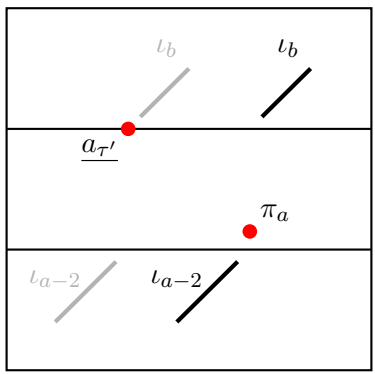
\begin{figure}[h]
\centering
\begin{tikzpicture}[scale=0.8]

\draw[thick] (0,0) rectangle (6,6);
\draw[thick](6,2)--(0,2);
\draw[thick] (6,4) -- (0,4);

\fill[red] (2,4) circle (0.12) node[black,below left]{$\underline{a_{\tau'}}$};

\draw[ultra thick,opacity=0.3] (2.2,4.2) -- (3,5) node[black, above left]{$\iota_b$};
\draw[ultra thick,opacity = 0.3] (0.8,0.8) -- (1.8,1.8);
\fill[white](0.8,1.5) node[black, opacity=0.3]{$\iota_{a-2}$};

\fill[red] (4,2.3) circle (0.12) node[black,above right]{$\pi_{a}$};

\draw[ultra thick] (4.2,4.2) -- (5,5) node[black, above left]{$\iota_b$};
\draw[ultra thick] (2.8,0.8) -- (3.8,1.8);
\fill[white](2.8,1.5) node[black]{$\iota_{a-2}$};

\end{tikzpicture}
\caption[below]{If $\underline{a}_{\tau'}$ and $ \underline{a}_{\pi'}$ have the same value but different positions, then the permutation $\tau'$ contains a pattern from $\Pi(a,b)$ with $a$ element $\underline{a}_{\tau'}$. }\label{Case2}
\end{figure}

In order to prove the Wilf-equivalence result central to this section, we need to describe some symmetries on permutations. The \emph{complement} of a permutation $\pi = \pi_1\ldots \pi_n$ is the permutation $\pi^c = (n+1-\pi_1)\ldots (n+1-\pi_n)$, the \emph{reverse} is $\pi^r = \pi_n\ldots \pi_1$, and the \emph{reverse complement} $\pi^{rc}$ is the reverse of the complement. It is straightforward to check that if $\sigma\in \Av(\Pi)$ then $\sigma^{rc}\in \Av(\Pi^{rc})$, where $\Pi^{rc} = \{\pi^{rc}:\pi\in \Pi\}$. 

We can now combine the results of this section to prove the Wilf-equivalence of partial shuffles of the same size, but first, we observe the following. When $S$ acts on $\pi\in \Av(\sigma_{a,b})$, it preserves the descent set, i.e., the set of indices $i$ such that $\pi_i>\pi_{i+1}$. 
To see this, note that if $\pi_i=\underline{a}$ then $\pi_{i-1}$ is never an associated $a-1$, and if $\pi\in \Av(\sigma_{a,b})$, then neither is $\pi_{i+1}$. Hence $S$ does not remove or add any descents. This observation, along with the following result, is equivalent to \cite[Lemma 4.4]{bloom2020revisiting}.

\begin{theorem}\label{Nearincreasingbasis}
For any pairs of integers $a,b$ and $c,d$ such that $a,c\geqslant 1$, $b,d \geqslant 0$ and $a+b=c+d\geqslant 2$, the sets of patterns $\Pi(a,b)$ and  $\Pi(c,d)$ are Wilf-equivalent. 
\end{theorem}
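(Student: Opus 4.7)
The plan is to establish Wilf-equivalence of $\Pi(a,b)$ and $\Pi(a-1,b+1)$ for every $a\geqslant 2$ and $b\geqslant 0$; induction on $a$ (decreasing $a$ by one at each step) then produces a chain $\Pi(a+b,0),\Pi(a+b-1,1),\ldots,\Pi(1,a+b-1)$ of Wilf-equivalent sets, which contains every admissible $\Pi(c,d)$ of size $a+b$.

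By Theorem \ref{injective} the map $S^{n-a}$ injects $\Av_n(\Pi(a,b))$ into $\Av_n(\Pi(a-1,b+1))$, giving
\[
\#\Av_n(\Pi(a,b))\leqslant \#\Av_n(\Pi(a-1,b+1)).
\]
For the reverse inequality I would invoke the reverse-complement symmetry recorded just above the theorem. From the definition of a partial shuffle, each $\pi\in\Pi(a,b)$ has its unique out-of-place element equal to $a$, with the other $a-1$ smaller and $b$ larger values arranged in increasing order; applying $\mathrm{rc}$ sends this distinguished value to $n+1-a=b+1$, with $b$ smaller and $a-1$ larger values still in increasing order. Hence $\Pi(a,b)^{rc}=\Pi(b+1,a-1)$, so $\mathrm{rc}$ restricts to a bijection $\Av_n(\Pi(a,b))\leftrightarrow\Av_n(\Pi(b+1,a-1))$, and, by the same computation with $(a,b)$ replaced by $(a-1,b+1)$, a bijection $\Av_n(\Pi(a-1,b+1))\leftrightarrow\Av_n(\Pi(b+2,a-2))$.

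I would then apply Theorem \ref{injective} a second time, this time to the pair $(b+2,a-2)$; this is permitted since $b+2\geqslant 2$, $a-2\geqslant 0$, and $(b+2)+(a-2)=a+b\geqslant 2$. It yields $\#\Av_n(\Pi(b+2,a-2))\leqslant \#\Av_n(\Pi(b+1,a-1))$. Chaining with the two reverse-complement bijections above gives
\[
\#\Av_n(\Pi(a-1,b+1))=\#\Av_n(\Pi(b+2,a-2))\leqslant \#\Av_n(\Pi(b+1,a-1))=\#\Av_n(\Pi(a,b)),
\]
which is exactly the missing reverse inequality. Combining this with the first inequality forces $\#\Av_n(\Pi(a,b))=\#\Av_n(\Pi(a-1,b+1))$, and induction on $a$ then delivers the theorem.

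The main conceptual step is noticing that Theorem \ref{injective} alone supplies only a one-sided inequality, and that the reverse-complement involution is what upgrades it to an equality, by trading $(a-1,b+1)$ for $(b+2,a-2)$ and effectively pushing the injection in the opposite direction. Once this is seen, the only calculation needed is the identification $\Pi(a,b)^{rc}=\Pi(b+1,a-1)$, which is routine from the definition.
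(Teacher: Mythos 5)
Your proposal is correct and takes essentially the same approach as the paper: one inequality comes from the injection $S^{n-a}$ of Theorem~\ref{injective}, the reverse inequality from the identification $\Pi(a,b)^{rc}=\Pi(b+1,a-1)$ together with a second application of Theorem~\ref{injective} to the pair $(b+2,a-2)$, and induction closes the chain of equivalences. If anything, your explicit choice of the pair $(b+2,a-2)$ and the check that it is admissible are spelled out more carefully than in the paper's own (rather terse) wording of the same argument.
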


\begin{proof}
By Lemma \ref{terminates}, if $a\geqslant 2$, the map $S^{n-a}$ sends permutations in  $\Av_n(\Pi(a,b))$ to permutations in  $\Av_n(\Pi(a-1,b+1))$. By Theorem \ref{injective}, $S^{n-a}$ is injective. Note that $(\Pi(a,b))^{rc} = \Pi(b+1,a-1)$ and  $(\Pi(a-1,b+1))^{rc} = \Pi(b+2,a-2)$. Hence, if we consider the reverse complements of the permutation classes that $S$ maps between, we have an injection in the other direction, from $\Av_n(\Pi(b+1,a-1))$ to $\Av_n(\Pi(b,a))$. From this it follows that $S^{n-a}$ is in fact a bijection between $\Av_n(\Pi(a,b))$ and  $\Av_n(\Pi(a-1,b+1))$, so the sets of patterns $\Pi(a,b)$ and $\Pi(a-1,b+1)$ are Wilf-equivalent. The result follows. 
\end{proof}

\section{Including a decreasing pattern in the basis.}

In this section we establish a larger family of Wilf-equivalent sets of patterns. We show that the map $S$ preserves the size of the longest decreasing subpermutation in $\Av(\Pi(a,b))$ and so for integers $a,c\geqslant 1$, $b,d\geqslant 0$ such that $a+b=c+d\geqslant 2$, the sets of patterns $\{\Pi(a,b),\delta_m\}$ and $\{\Pi(c,d),\delta_m\}$ are Wilf-equivalent. We then discuss the enumeration of the class $\Av(\Pi(a,b),\delta_m)$. 

\begin{lemma}\label{dontgaindeltamplus1}
Suppose that $\pi \in \Av(\sigma_{a,b})$ and that the longest decreasing subpermutation of $\pi$ has size $m$. Then $S(\pi)$ does not contain $\delta_{m+1}$. 
\end{lemma}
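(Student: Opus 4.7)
If $\pi \in \Av(\Pi(a-1,b+1))$ then $S(\pi) = \pi$ and the conclusion is immediate. Otherwise, suppose for contradiction that $S(\pi)$ contains $\delta_{m+1}$ at positions $i_1 < \cdots < i_{m+1}$; the plan is to derive either a $\delta_{m+1}$ in $\pi$ (contradicting the hypothesis on the longest decreasing subsequence of $\pi$) or an occurrence of $\sigma_{a,b}$ in $\pi$ (contradicting $\pi \in \Av(\sigma_{a,b})$).

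By Corollary \ref{ignorej}, if $\pi_{i_1}\cdots\pi_{i_{m+1}}$ does not contain both $\underline{a}$ and some associated $a-1$, it is order-isomorphic to $S(\pi_{i_1})\cdots S(\pi_{i_{m+1}})$ and so is itself a $\delta_{m+1}$ in $\pi$. So assume $\pi_{i_j} = \underline{a}$ for some $j$, and that some $i_k$ with $k<j$ holds an associated $a-1$. Since $S$ cyclically permutes values on $[\underline{a-1},\underline{a}]$ and fixes others, analysing the strict decrease $S(\pi)_{i_1}>\cdots>S(\pi)_{i_{m+1}}$ yields an $s$ with $0\leqslant s\leqslant j-2$ such that $\pi_{i_1},\ldots,\pi_{i_s}$ are strictly decreasing values exceeding $\underline{a}$; $\pi_{i_{s+1}},\ldots,\pi_{i_{j-1}}$ are strictly decreasing values in $[\underline{a-1},\underline{a}-1]$ (necessarily associated $a-1$'s); $\pi_{i_j}=\underline{a}$; and $\pi_{i_{j+1}},\ldots,\pi_{i_{m+1}}$ are strictly decreasing values less than $\underline{a-1}$. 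The case $a=2$ does not arise here: by Lemma \ref{intervallemma} all associated $a-1$'s for $a=2$ lie right of $\underline{a}$, while $i_{s+1},\ldots,i_{j-1}$ lie left of $i_j$.

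For $a\geqslant 3$, fix a witness for $\underline{a}$ being the $a$ of some $\rho\in\Pi(a-1,b+1)\setminus\{\sigma_{a,b}\}$: an increasing $\iota_{a-2}$ at positions $e_1<\cdots<e_{a-2}$ (all less than the position of $\underline{a}$) with values $h_1<\cdots<h_{a-2}<\underline{a-1}$, chosen so that $e_{a-2}$ is the rightmost $a-2$ position over all such witnesses, together with an increasing $\iota_b$ at positions $j_1<\cdots<j_b$ (all greater than the position of $\underline{a}$) with values $v_1<\cdots<v_b$ all exceeding $\underline{a}$. By the argument of Lemma \ref{intervallemma}, every associated $a-1$ lying left of $\underline{a}$ lies left of $e_{a-2}$, whence $i_{j-1}<e_{a-2}<i_j$. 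Form the candidate decreasing subsequence in $\pi$ at positions $i_1,\ldots,i_{j-1},e_{a-2},i_{j+1},\ldots,i_{m+1}$ (length $m+1$); all position and value inequalities follow from the above, except possibly $h_{a-2}>\pi_{i_{j+1}}$.

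If $h_{a-2}>\pi_{i_{j+1}}$, this is a $\delta_{m+1}$ in $\pi$, a contradiction. Otherwise $h_{a-2}<\pi_{i_{j+1}}$; setting $q:=\pi_{i_{j+1}}$ gives $h_{a-2}<q<\underline{a-1}$. Then $h_1,\ldots,h_{a-2},q$ at positions $e_1,\ldots,e_{a-2},i_{j+1}$ form an $\iota_{a-1}$ in $\pi$; together with $\underline{a}$ (whose position lies strictly between $e_{a-2}$ and $i_{j+1}$) and the $\iota_b$ at $j_1<\cdots<j_b$, these elements witness a pattern from $\Pi(a-1,b+1)$ in $\pi$ of which $\underline{a}$ plays the $a$ and $q$ plays the $a-1$. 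Letting $k^*$ denote the number of $j_k$ with $j_k<i_{j+1}$: if $k^*=0$ the pattern is $\sigma_{a,b}$, contradicting $\pi\in\Av(\sigma_{a,b})$; if $k^*\geqslant 1$ the pattern lies in $\Pi(a-1,b+1)\setminus\{\sigma_{a,b}\}$, so $q$ is by definition an associated $a-1$ of $\underline{a}$, forcing $q\in[\underline{a-1},\underline{a}-1]$ by Lemma \ref{intervallemma} and contradicting $q<\underline{a-1}$. The main obstacle is precisely this final case analysis: one must verify, independently of how the positions $j_1,\ldots,j_b$ interleave with $i_{j+1}$, that the elements identified really do witness a pattern from $\Pi(a-1,b+1)$ in $\pi$, and then extract a contradiction either from $\sigma_{a,b}$-avoidance or from the characterization of associated $a-1$'s in Lemma \ref{intervallemma}.
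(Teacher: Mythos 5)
Your proof is correct and takes essentially the same route as the paper's: both decompose the alleged $\delta_{m+1}$ in $S(\pi)$ around $S(\underline{a})$, substitute the rightmost $a-2$ witness element for $S(\underline{a})$, and show that the only way the substituted sequence can fail to be decreasing in $\pi$ is if the next element below were itself an associated $a-1$ of $\underline{a}$ (or produced $\sigma_{a,b}$), which is impossible. Your write-up is merely more explicit about the $a=2$ case and the final interleaving analysis.
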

\begin{proof}
Suppose that the longest decreasing subpermutation of $\pi\in \Av(\sigma_{a,b})$ is $\delta_m$, and that $S(\pi)$ contains $\delta_{m+1}$. By Corollary \ref{ignorej}, the pattern $\delta_{m+1}$ contains $S(\underline{a})$, and a nonempty decreasing pattern $\delta_d$ that contains the images of one or more $a-1$ elements associated with $\underline{a}$ in $\pi$. We let $\delta_c$ and $\delta_{m-c-d}$ denote the (possibly empty) parts of $\delta_{m+1}$ above $\underline{a}$ and below $\underline{a-1}$ in $\pi$, respectively (see Figure \ref{deltamplus1}, left).

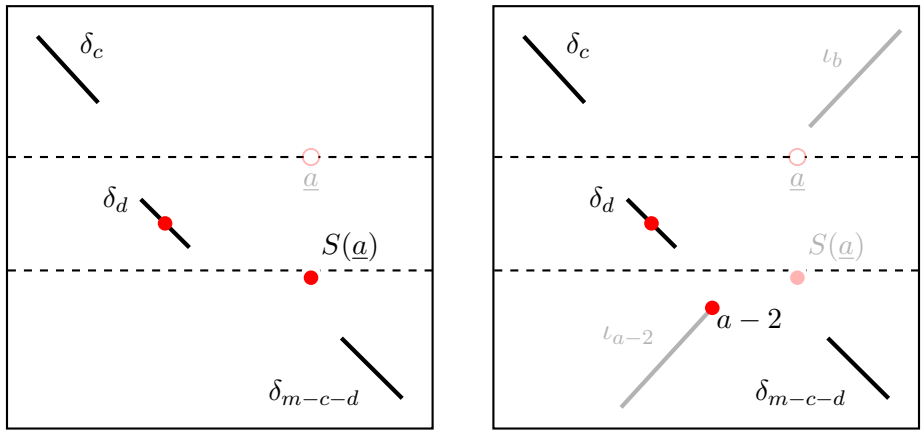
\begin{figure}[h]
\centering
\begin{tikzpicture}[scale=0.8]

\draw[thick] (0,0) rectangle (7,7);
\draw[thick,dashed](7,2.62)--(0,2.62);
\draw[thick,dashed] (7,4.5) -- (0,4.5);

\fill[white] (5,4.4) node[black, below,opacity = 0.3] {$\underline{a}$};
\fill[red,opacity = 0.3] (5,4.5) circle (0.15);
\fill[white] (5,4.5) circle (0.12);

\fill[white] (5,2.6) circle (0.15) node[black,above right] {$S(\underline{a})$};

\fill[red] (5,2.5) circle (0.12);

\draw[ultra thick, black] (2.2,3.8) -- (3,3) node[ at start, left] {$\delta_d$};

\draw[ultra thick, black] (0.5,6.5) -- (1.5,5.4) node[midway, above right] {$\delta_c$};

\draw[ultra thick, black] (5.5,1.5) -- (6.5,0.5) node[midway, below left] {$\delta_{m-c-d}$};

\fill[red] (2.6,3.4) circle (0.12) ;

\draw[thick] (8,0) rectangle (15,7);
\draw[thick,dashed](15,2.62)--(8,2.62);
\draw[thick,dashed] (15,4.5) -- (8,4.5);

\draw[ultra thick,black,opacity = 0.3] (13.2,5) -- (14.7,6.6) node[midway, above left] {$\iota_{b}$};

\fill[white] (13,4.4) node[black, below,opacity = 0.3] {$\underline{a}$};
\fill[red,opacity = 0.3] (13,4.5) circle (0.15);
\fill[white] (13,4.5) circle (0.12);

\fill[white] (13,2.6) circle (0.15) node[black,above right,opacity = 0.3] {$S(\underline{a})$};

\fill[red,opacity = 0.3] (13,2.5) circle (0.12);

\draw[ultra thick, black] (10.2,3.8) -- (11,3) node[at start, left] {$\delta_d$};

\draw[ultra thick, black] (8.5,6.5) -- (9.5,5.4) node[midway,above right] {$\delta_c$};

\draw[ultra thick, black] (13.5,1.5) -- (14.5,0.5) node[midway, below left] {$\delta_{m-c-d}$};

\fill[red] (10.6,3.4) circle (0.12) ;

\draw[ultra thick,black,opacity = 0.3] (10.1,0.35) -- (11.6,2) node[midway,above left] {$\iota_{a-2}$};

\fill[white] (11.5,2.15) circle (0.1) node[black, below right]{$a-2$};

\fill[red] (11.6,2) circle (0.12);

\end{tikzpicture}
\caption[below]{Left: If $\pi\in \Av(\delta_{m+1})$ and $S(\pi)\notin \Av(\delta_{m+1})$, any $\delta_{m+1}\in S(\pi)$ must involve $S(\underline{a})$ and some associated $a-1$. Right: An $a-2$ between $S(\underline{a})$ and the $a-1$ implies $\pi\notin \Av(\delta_{m+1})$.}\label{deltamplus1}
\end{figure}

The element $S(\underline{a})$ must be southeast of this $\delta_d$ in $S(\pi)$, since $\delta_d$ and $S(\underline{a})$ form a decreasing subpermutation, and $S(\underline{a})$ is less than the images of its associated $a-1$ elements in $S(\pi)$. Since $S(\pi)$ does not contain $\sigma_{a,b}$, we know there is some $a-2$ element southeast of every element of $\delta_d$ and southwest of $\underline{a}$. We also know that every element of $\delta_{m-c-d}$ is below this $a-2$ element, since if some element were not, it would be an $a-1$ element for $\underline{a}$, and would be above $S(\underline{a})$ in $S(\pi)$. From here we see that the occurrence of the pattern $\delta_{m+1}$ in $S(\pi)$ must have already existed in $\pi$, with the $a-2$ element replacing $S(\underline{a})$ (see Figure \ref{deltamplus1}, right). 
This contradicts our assumption on $\pi$.
\end{proof}

\begin{lemma}\label{keepdeltam}
Suppose that $\pi\in \Av_n(\Pi(a,b))$. Then the longest decreasing subpermutations of $\pi$ and $S^{n-a}(\pi)$ have the same size. 
\end{lemma}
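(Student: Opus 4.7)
Let $f(\pi)$ denote the length of the longest decreasing subpermutation of $\pi$. The plan is to show that for every $m\geqslant 0$, the map $S^{n-a}$ restricts to a bijection $\Av_n(\Pi(a,b),\delta_{m+1})\to\Av_n(\Pi(a-1,b+1),\delta_{m+1})$; since this holds for every $m$, one deduces $f(\pi)=f(S^{n-a}(\pi))$.

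First I would show the image lies in the target class: since $\pi\in\Av_n(\Pi(a,b))\subseteq\Av(\sigma_{a,b})$, Lemma~\ref{nosigmaab} keeps every iterate $S^k(\pi)$ in $\Av(\sigma_{a,b})$, and applying Lemma~\ref{dontgaindeltamplus1} at each of the $n-a$ steps shows that $\pi\in\Av(\delta_{m+1})$ implies $S^{n-a}(\pi)\in\Av(\delta_{m+1})$. Injectivity of the restricted map then follows from Theorem~\ref{injective}, giving an injection $\Av_n(\Pi(a,b),\delta_{m+1})\hookrightarrow\Av_n(\Pi(a-1,b+1),\delta_{m+1})$.

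To promote this to a bijection I would establish the reverse inequality of cardinalities. Running the above argument with parameters $(b+2,a-2)$ in place of $(a,b)$ produces an injection $\Av_n(\Pi(b+2,a-2),\delta_{m+1})\hookrightarrow\Av_n(\Pi(b+1,a-1),\delta_{m+1})$. Since $\delta_{m+1}^{rc}=\delta_{m+1}$, reverse-complement is a bijection of avoidance classes that preserves $\delta_{m+1}$-avoidance; combined with the rc-identities $\Pi(b+2,a-2)^{rc}=\Pi(a-1,b+1)$ and $\Pi(b+1,a-1)^{rc}=\Pi(a,b)$, this gives $|\Av_n(\Pi(a-1,b+1),\delta_{m+1})|\leqslant|\Av_n(\Pi(a,b),\delta_{m+1})|$. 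Combined with the forward inequality, equality holds and the injection is a bijection.

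The main thing to verify is that Lemma~\ref{dontgaindeltamplus1} transfers to the $(b+2,a-2)$ setting via reverse-complement; this reduces to the identity $\sigma_{a,b}^{rc}=\sigma_{b+2,a-2}$, so that $\pi\in\Av(\sigma_{b+2,a-2})$ iff $\pi^{rc}\in\Av(\sigma_{a,b})$, and the proof of Lemma~\ref{dontgaindeltamplus1} is symmetric in its parameters.
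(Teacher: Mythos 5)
Your argument is correct, but it takes a genuinely different route from the paper's. The paper proves the stronger per-step statement that a single application of $S$ cannot shorten the longest decreasing subpermutation, by an explicit structural analysis: it takes a $\delta_m$ in $S^k(\pi)$ passing through $\underline{a}$ and an associated $a-1$, and exhibits a surviving $\delta_m$ in $S^{k+1}(\pi)$, using Lemma~\ref{Lownewj} to rule out the bad configuration. You instead deduce the conclusion non-constructively from cardinality: Lemma~\ref{dontgaindeltamplus1} (iterated, with Lemma~\ref{nosigmaab} keeping each iterate in $\Av(\sigma_{a,b})$) together with Theorem~\ref{injective} gives an injection $\Av_n(\Pi(a,b),\delta_{m+1})\hookrightarrow\Av_n(\Pi(a-1,b+1),\delta_{m+1})$; the same argument at parameters $(b+2,a-2)$ composed with reverse-complement gives the opposite cardinality inequality; and the resulting family of bijections, one for each $m$, forces $S^{n-a}$ to preserve the longest-decreasing-subpermutation statistic, since a strict drop from $M$ to $m$ would produce a second preimage of $S^{n-a}(\pi)$ inside $\Av_n(\Pi(a,b),\delta_{m+1})$, contradicting injectivity. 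This is sound and not circular --- none of the cited results depend on Lemma~\ref{keepdeltam} --- but note that you have effectively proved Theorem~\ref{avabd} along the way, so in your architecture this lemma becomes a corollary of the Wilf-equivalence rather than an ingredient of it. Your route buys brevity and avoids the diagram-chasing; it loses the explicit witness for the preserved $\delta_m$ and the per-application statement. One small remark: your closing paragraph is unnecessary, since Lemma~\ref{dontgaindeltamplus1} is proved for arbitrary admissible parameters and can simply be instantiated at $(b+2,a-2)$ (its hypothesis holds because $\sigma_{b+2,a-2}\in\Pi(b+2,a-2)$); no transfer via reverse-complement is needed, although the identity $\sigma_{a,b}^{rc}=\sigma_{b+2,a-2}$ you invoke is correct.
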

\begin{proof}
We will show that for every $0\leqslant k < n-a$, if the longest decreasing permutation that appears as a pattern in $S^k(\pi)$ is $\delta_m$, then the longest decreasing permutation that appears as a pattern in $S^{k+1}(\pi)$ is also $\delta_m$. For ease of notation we write $S^k(\pi) = \pi'$. This is trivially true if $\pi'\in \Av_n(\Pi(a-1,b+1))$, so suppose that $\pi'$ contains some pattern from $\Pi(a-1,b+1)$. We will verify that $\delta_m$ is contained in $S(\pi')$. By Lemma \ref{ignorej}, we only need to consider when $\underline{a}$ and some associated $a-1$ element(s) are part of $\delta_m$ in $\pi'$. Let $q$ denote the maximal $a-1$ element associated with $\underline{a}$ that is part of $\delta_m$. Since $q<\underline{a}$ in $\pi'$, it must be the case that $q$ is to the right of $\underline{a}$.

Since $\pi'\in \Av(\sigma_{a,b})$, there must be an $\iota_b$ with least element $t$ which is northeast of $\underline{a}$ and northwest of $q$. Let $s$ denote the smallest element of $\delta_m$ that is larger than $\underline{a}$ (if such an element exists). If $t<s$ (or if $s$ does not exist) then $S(\pi)$ contains $\delta_m$, replacing $\underline{a}$ with $t$ (see Figure \ref{deltampic}, left). If $t>s$, then the $\iota_{a-2}$ southwest of $S(\underline{a})$, the $\iota_b$ containing $t$, and $S(\underline{a})$ form an increasing subpermutation of size $a+b-1$, and combined with $s$, this gives a pattern from $\Pi(a,b)$ with $a$ element $s$ that is greater than $\underline{a}$, contradicting Lemma \ref{Lownewj} (see Figure \ref{deltampic}, right). Hence $S(\pi')$ contains $\delta_m$. 
\end{proof}

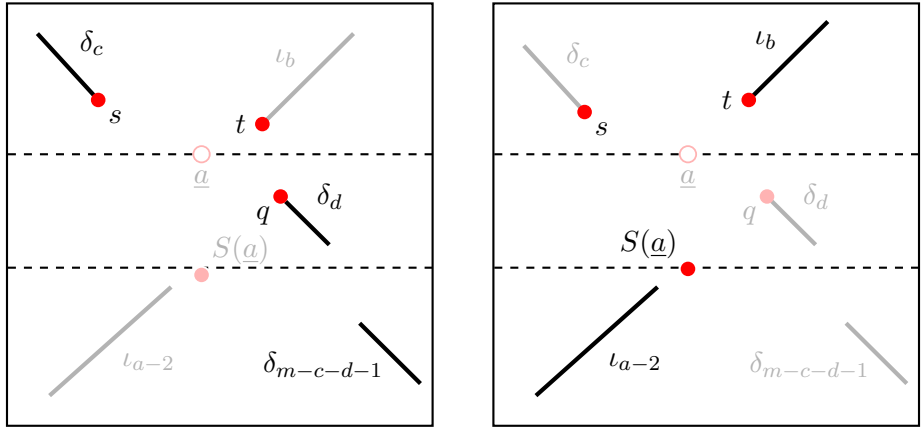
\begin{figure}[h]
\centering
\begin{tikzpicture}[scale=0.8]

\draw[thick] (0,0) rectangle (7,7);
\draw[thick,dashed] (7,2.62)--(0,2.62);
\draw[thick,dashed] (7,4.5) -- (0,4.5);

\draw[ultra thick,black,opacity = 0.3] (4.2,5) -- (5.7,6.5) node[midway, above left] {$\iota_{b}$};

\fill[white] (4.1,5) circle (0.12) node[black, left]{$t$};
\fill[red] (4.2,5) circle (0.12);

\fill[white] (3.2,4.4) node[black, below,opacity = 0.3] {$\underline{a}$};
\fill[red,opacity = 0.3] (3.2,4.5) circle (0.15);
\fill[white] (3.2,4.5) circle (0.12);

\fill[white] (3.2,2.5) circle (0.15) node[black,above right,opacity = 0.3] {$S(\underline{a})$};
\fill[red,opacity = 0.3] (3.2,2.5) circle (0.12);

\draw[ultra thick, black] (4.5,3.8) -- (5.3,3) node[midway, above right] {$\delta_d$};

\draw[ultra thick, black] (0.5,6.5) -- (1.5,5.4) node[midway,above right] {$\delta_c$};
\fill[red] (1.5,5.4) circle(0.12) node[black, below right]{$s$};

\draw[ultra thick, black] (5.8,1.7) -- (6.8,0.7);
\fill[white] (5.2,1) circle (0.01) node[black] {$\delta_{m-c-d-1}$};

\fill[white] (4.5,3.8) circle (0.12) node[black,below left]{$q$};
\fill[red] (4.5,3.8) circle (0.12);

\draw[ultra thick,black,opacity = 0.3] (0.7,0.5) -- (2.7,2.3) node[midway, below right] {$\iota_{a-2}$};

\draw[thick] (8,0) rectangle (15,7);
\draw[thick,dashed](15,2.62)--(8,2.62);
\draw[thick,dashed] (15,4.5) -- (8,4.5);

\draw[ultra thick,black] (12.2,5.4) -- (13.5,6.7) node[midway, above left] {$\iota_{b}$};

\fill[white] (12.1,5.4) circle (0.12) node[black, left]{$t$};
\fill[red] (12.2,5.4) circle (0.12) ;

\fill[white] (11.2,4.4) node[black, below,opacity = 0.3] {$\underline{a}$};
\fill[red,opacity = 0.3] (11.2,4.5) circle (0.15);
\fill[white] (11.2,4.5) circle (0.12);

\fill[white] (11.2,2.6) circle (0.15) node[black,above left] {$S(\underline{a})$};

\fill[red] (11.2,2.6) circle (0.12);

\draw[ultra thick, black, opacity=0.3] (12.5,3.8) -- (13.3,3) node[midway, above right] {$\delta_d$};

\draw[ultra thick, black, opacity = 0.3] (8.5,6.3) -- (9.5,5.2) node[midway,above right] {$\delta_c$};

\fill[red] (9.5,5.2) circle(0.12) node[black, below right]{$s$};

\fill[white] (13.2,1) circle (0.01) node[black,opacity=0.3] {$\delta_{m-c-d-1}$};
\draw[ultra thick, black,opacity=0.3] (13.8,1.7) -- (14.8,0.7) ;

\fill[white] (12.5,3.8) circle (0.12) node[black,opacity=0.3,below left]{$q$};

\fill[red,opacity=0.3] (12.5,3.8) circle (0.12) ;

\draw[ultra thick,black] (8.7,0.5) -- (10.7,2.3) node[midway, below right] {$\iota_{a-2}$};

\end{tikzpicture}
\caption[below]{Left: if $t<s$, the longest decreasing subpermutation $\delta_m$ is preserved by $S$. Right: if $t>s$, $S(\pi')$ contains a pattern from $\Pi(a,b)$ in which $s>\underline{a}$ acts as $a$, a contradiction.}\label{deltampic}
\end{figure}

Combining Lemmas \ref{dontgaindeltamplus1} and \ref{keepdeltam} with Theorem \ref{Nearincreasingbasis} proves the following:
\begin{theorem}\label{avabd}
For integers $a,b,c,d,m$ such that $a,c\geqslant 1$, $b,d\geqslant 0$, $a+b=c+d\geqslant 2$, and $m\geqslant 1$, the sets of patterns $\{\Pi(a,b), \delta_m\}$ and $\{\Pi(c,d),\delta_m\}$ are Wilf-equivalent.
\end{theorem}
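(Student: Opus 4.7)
The plan is to adapt the bijective strategy of Theorem \ref{Nearincreasingbasis} to the smaller avoidance classes cut out by additionally forbidding $\delta_m$. As in Section 2, it suffices to handle a single inductive step: show that for $a\geqslant 2$ the sets $\{\Pi(a,b),\delta_m\}$ and $\{\Pi(a-1,b+1),\delta_m\}$ are Wilf-equivalent. Iterating this step between $\Pi(a+b,0)$ and any $\Pi(c,d)$ with $c+d=a+b$ then yields the general statement.

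For the forward half, I would argue that the map $S^{n-a}$ restricts to a function
\[
S^{n-a}\colon \Av_n(\Pi(a,b),\delta_m) \to \Av_n(\Pi(a-1,b+1),\delta_m).
\]
Indeed, Corollary \ref{cor: image of S n-a} places the image in $\Av_n(\Pi(a-1,b+1))$, and Lemma \ref{keepdeltam} guarantees that the longest decreasing subpermutation has the same length in $\pi$ as in $S^{n-a}(\pi)$, so $\delta_m$-avoidance is preserved. Injectivity of this restriction is inherited directly from Theorem \ref{injective}.

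For the reverse direction, I would invoke the reverse-complement symmetry exactly as in the proof of Theorem \ref{Nearincreasingbasis}. Because $(\Pi(a,b))^{rc} = \Pi(b+1,a-1)$ and the decreasing permutation satisfies $\delta_m^{rc} = \delta_m$, the construction applied to the parameters $(b+1,a-1)$ yields an injection $\Av_n(\Pi(b+1,a-1),\delta_m) \hookrightarrow \Av_n(\Pi(b,a),\delta_m)$; conjugating by $rc$ produces an injection $\Av_n(\Pi(a-1,b+1),\delta_m) \hookrightarrow \Av_n(\Pi(a,b),\delta_m)$ going back. Two injections between finite sets in opposite directions force both to be bijections.

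I do not anticipate any real obstacle: all of the heavy lifting has already been carried out in Lemmas \ref{dontgaindeltamplus1} and \ref{keepdeltam} (which together show that $S$ never changes the length of the longest decreasing subpermutation) and in Theorems \ref{injective} and \ref{Nearincreasingbasis}. The only subtlety worth flagging is the trivial but essential observation that $\delta_m$ is fixed by reverse complementation, which is exactly what lets the surjectivity argument survive the addition of the extra pattern.
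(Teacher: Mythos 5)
Your proposal is correct and follows essentially the same route as the paper, which obtains Theorem~\ref{avabd} by combining Lemmas~\ref{dontgaindeltamplus1} and \ref{keepdeltam} (the map $S$ preserves the length of the longest decreasing subpermutation) with the bijection underlying Theorem~\ref{Nearincreasingbasis}; you simply make explicit the restriction of that bijection and the two-opposite-injections argument. The only quibble is a bookkeeping slip in the reverse direction: to land on $\Av_n(\Pi(a-1,b+1),\delta_m)\hookrightarrow \Av_n(\Pi(a,b),\delta_m)$ after conjugating by reverse complement you should apply the construction with parameters $(b+2,a-2)$, since $(\Pi(b+2,a-2))^{rc}=\Pi(a-1,b+1)$ and $(\Pi(b+1,a-1))^{rc}=\Pi(a,b)$, whereas the parameters $(b+1,a-1)$ you name are not even admissible when $b=0$ --- but this mirrors the looseness in the paper's own proof of Theorem~\ref{Nearincreasingbasis} and does not affect the validity of the argument.
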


We now consider the enumeration of permutations in $\Av(\Pi(a,b), \delta_m)$. 
A permutation class is called a \textit{polynomial class} if the number of permutations of size $n$ is counted by a polynomial in $n$, for sufficiently large $n$. That $\Av(\Pi(a,b), \delta_m)$ is a polynomial class is immediate from \cite[Theorem 1.3]{HV16}. 

Given a permutation $\sigma$ of size $m$, and monotone permutations $\alpha_1,\ldots,\alpha_m$, the \emph{inflation} of $\sigma$ by $\alpha_1,\ldots,\alpha_m$ is the permutation $\pi=\sigma[\alpha_1,\ldots,\alpha_m]$ obtained by replacing $\sigma_i$ with an interval of contiguous-valued elements determined by $\alpha_i$. For example, $3412[1,321,12,21]=5~876~12~43$. The polynomial classes are a specific type of \textit{geometric grid class} \cite{albert2013geometric} whose elements of a given size are inflations by monotone intervals of one of a finite number of permutations, that Homberger and Vatter christened \textit{peg permutations} \cite{HV16}. These are a convenient tool to study the enumeration of the classes $\Av_n(\Pi(a,b), \delta_m)$. 
A peg permutation $\tilde{\rho}$ has elements that are marked with $+,-,$ or ${\Cdot}$, indicating that they may be inflated with an arbitrarily long increasing permutation, an arbitrarily long decreasing permutation, or a single element, respectively. We can represent the permutation $3412[1,321,12,21]=58761243$ in this manner as $3^{\Cdot}4^-1^+2^-[1,3,2,2]$. The \textit{grid class} of a peg permutation $\tilde{\rho}$ is the set of permutations that may be obtained by inflating $\tilde{\rho}$ with monotone intervals as indicated by the markings on $\tilde{\rho}$, and a peg permutation is said to be \textit{compact} if its grid class is not wholly contained in the grid class of a different peg permutation. For example, $1^+2^{\Cdot}$ is not compact, since its inflations are contained in the grid class of $1^+$. 

In principle, one may enumerate a specific polynomial class by determining the maximum number of arbitrary inflations permitted within the set of compact peg permutations that define the class. At a slightly coarser level, we will use peg permutations to find the degree of the polynomials that count permutations in $\Av_n(\Pi(a,b),\delta_m)$, and in the case when $m=3$, we show that the leading coefficient of the counting polynomial is a Catalan number.

\begin{theorem}\label{polyord}
For any $a,b,m,$ such that $a\geqslant 1$, $a+b\geqslant 2$, $m\geqslant 2$, and for sufficiently large $n$, the number of permutations in $\Av_n(\Pi(a,b), \delta_m)$ is counted by a polynomial in $n$ with degree $(a+b-2)(m-2).$ 
\end{theorem}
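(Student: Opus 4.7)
The plan is to use the peg-permutation framework for polynomial classes introduced by \cite{HV16} and summarised just before the theorem. For $n$ sufficiently large, $\#\Av_n(\Pi(a,b),\delta_m) = \sum_{\tilde\rho}\binom{n-c_{\tilde\rho}-1}{p_{\tilde\rho}-1}$, where the sum ranges over compact peg permutations $\tilde\rho$ whose grid class lies in $\Av(\Pi(a,b),\delta_m)$ and $p_{\tilde\rho}$, $c_{\tilde\rho}$ count the $+$ and $\Cdot$ marks of $\tilde\rho$. The degree of this polynomial is therefore $p^{*}-1$, where $p^{*}=\max_{\tilde\rho} p_{\tilde\rho}$, and the theorem reduces to proving $p^{*}=(a+b-2)(m-2)+1$.

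First I would record two structural observations. No such $\tilde\rho$ can contain a $-$ mark: a single $-$ mark inflated to $\delta_M$ with $M\geqslant m$ would place $\delta_m$ in the grid class. So $\tilde\rho$ has only $+$ and $\Cdot$ marks; since each $+$ mark contributes at most one element to any decreasing subsequence of an inflation, the longest decreasing subpermutation of an inflation equals that of $\tilde\rho$ itself, and $\tilde\rho$ (as a permutation of its marks) must avoid $\delta_m$. By Theorem \ref{avabd} I may further assume $a=1$, so that $\Pi(a,b)=\Pi(1,a+b-1)$. An argument in the spirit of Lemma \ref{Lownewj} then shows every $+$ mark of $\tilde\rho$ is a right-to-left minimum: otherwise, inflating the offending $+$ mark to $\iota_{a+b-1}$ would produce a $\Pi(1,a+b-1)$ pattern in the grid class, with a smaller-valued mark to its right playing the role of the ``$1$''.

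The combinatorial heart of the proof is the determination of $p^{*}$. For the upper bound, I would decompose $\tilde\rho$ into $m-1$ maximal increasing subsequences (possible since $\tilde\rho\in\Av(\delta_m)$); all $+$ marks lie on a single such subsequence (the right-to-left minima). A further analysis of how pairs of $+$ marks, combined with intermediate-valued marks on the remaining subsequences, can be inflated to $\Pi(a,b)$ patterns shows that compactness together with grid-class $\Pi(a,b)$-avoidance caps the number of $+$ marks at $(a+b-2)(m-2)+1$. For the lower bound I would exhibit a compact peg permutation attaining this count, generalising $\tilde\rho=1^+3^\Cdot 2^+$ (for $m=3$, $a+b=3$) and $\tilde\rho=1^+5^\Cdot 2^+4^\Cdot 3^+$ (for $m=4$, $a+b=3$): place $(a+b-2)(m-2)+1$ $+$ marks at the smallest values, interleaved with $\Cdot$ marks at strictly larger values so that the longest decreasing subpermutation of $\tilde\rho$ equals $m-1$, and each pair of adjacent $+$ marks is separated by a $\Cdot$ mark blocking both compactness-breaking collapses and the emergence of $\Pi(a,b)$ patterns in the grid class.

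The main obstacle will be the upper bound: verifying that any configuration with $(a+b-2)(m-2)+2$ or more $+$ marks either fails compactness or produces a $\Pi(a,b)$ pattern in some inflation. This requires a case analysis in the spirit of Lemmas \ref{intervallemma}--\ref{Lownewj}, tracking how an increasing chain of marks through several $+$ marks, together with a well-placed intermediate-valued mark, realises the ``$a$'' element of a $\Pi(a,b)$ pattern in a non-natural position. Once the upper bound is combined with the construction for the lower bound, we obtain $\deg = p^{*} - 1 = (a+b-2)(m-2)$ as required.
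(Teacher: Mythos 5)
Your overall strategy matches the paper's: work in the peg-permutation framework of \cite{HV16}, reduce the degree computation to determining the maximum number $p^{*}$ of $+$ marks in a compact peg permutation whose grid class lies in the avoidance class, rule out $-$ marks, locate the admissible $+$ marks (you normalise to $\Pi(1,a+b-1)$ and place them at right-to-left minima; the paper normalises to $\Pi(a+b,0)$ and places them at the ``non-dominated'' left-to-right maxima --- these are reverse-complement views of the same fact), and then claim $p^{*}=(a+b-2)(m-2)+1$. The problem is that the upper bound on $p^{*}$ --- which is the entire content of the theorem --- is not proved: you explicitly defer it to ``a further analysis'' and flag it as ``the main obstacle.'' As written, the proposal establishes neither bound, so this is a genuine gap rather than a different route.

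The missing idea is short and concrete. Compactness forces, between each pair of consecutive $+$-markable elements, at least one \emph{separating} element lying below all of them (separation by value alone is impossible here, since it would create a configuration excluded by the $\sigma_{a+b,0}$-avoidance that already restricted the $+$ marks). The set of separating elements can contain neither $\iota_{a+b-1}$ (together with an inflatable maximum to its northwest this yields $(a+b)12\ldots(a+b-1)\in\Pi(a+b,0)$ in the grid class) nor $\delta_{m-1}$ (topped by such a maximum it yields $\delta_m$). By the Erd\"{o}s--Szekeres theorem there are therefore at most $(a+b-2)(m-2)$ separating elements, hence at most $(a+b-2)(m-2)+1$ $+$ marks; and the same theorem shows the bound is attained, by arranging the separators as any permutation in $\Av_{(a+b-2)(m-2)}(\iota_{a+b-1},\delta_{m-1})$, which is nonempty. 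Your lower-bound sketch is consistent with this in the small cases you list, but for general $m$ you do not specify how the ${\Cdot}$-marked elements are arranged so that both $\Pi(a,b)$ and $\delta_m$ are avoided while $(a+b-2)(m-2)$ separators are packed in; the Erd\"{o}s--Szekeres observation is exactly what makes both directions work.
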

\begin{proof}
We will prove that the theorem holds for  $\Av(\Pi(a+b,0), \delta_m)$, after which the general case follows from Theorem \ref{avabd}. 

The class  $\Av(\Pi(a+b,0), \delta_m)$ cannot contain the peg permutation $1^-$, so we need only consider inflation by increasing patterns. In order to avoid the pattern $(a+b)12\ldots (a+b-1)\in \Pi(a+b,0)$, the class cannot contain $21^+$, so inflation by an increasing permutation may only occur at left-to-right maxima. Furthermore, in order to avoid $\sigma_{a+b,0}$, the class cannot contain $1^+32$, meaning the only left-to-right maxima that may be marked with a $+$ are those that occur above every element that is not a left-to-right maximum. 
Since these left-to-right maxima are not dominated by any elements that are not themselves left-to-right maxima, we call them \textit{non-dominated left-to-right maxima}. We may bound the degree of the counting polynomial by bounding the number of non-dominated left-to-right maxima that a compact peg permutation $\tilde{\pi}$ whose expansions lie in $\Av(\Pi(a+b,0),\delta_m)$ may contain. Since $\tilde{\pi}$ is compact, every pair of non-dominated left-to-right maxima must have some lesser \textit{separating element} between them. In general, left-to-right maxima may be separated by value rather than position, but this creates a 132 pattern which cannot involve two non-dominated left-to-right maxima. The separating elements are below all of the non-dominated left-to-right maxima, and southeast of at least one of them. For $\tilde{\pi}$ to avoid $(a+b)12\ldots(a+b-1)$ and $\delta_m$, the separating elements cannot contain $\iota_{a+b-1}$ or $\delta_{m-1}$. Therefore, by the Erd\"{o}s-Szekeres theorem \cite{ES35}, there can be at most $(a+b-2)(m-2)$ separating elements. Since at least one separating element must occur between each pair of non-dominated left-to-right maxima, there can be at most $(a+b-2)(m-2)+1$ non-dominated left to right maxima. The number of ways that we may inflate such a peg permutation with $n$ elements is the number of ways to distribute $n$ elements into $(a+b-2)(m-2)+1$ slots, which is the coefficient of $x^n$ in the expansion of $(1-x)^{-((a+b-2)(m-2)+1)}$, given by a polynomial in $n$ of degree $(a+b-2)(m-2)$.

This places an upper bound on the degree of the counting polynomial. This upper bound is attained since the construction detailed in the previous paragraph is achieved by arranging the separating elements into any pattern from $\Av_{(a+b-2)(m-2)}(\iota_{a+b-1},\delta_{m-1})$, which is not empty, by the Erd\"{o}s-Szekeres theorem \cite{ES35}. 
\end{proof}

We demonstrate the construction used in the proof of Theorem \ref{polyord} with an example. 
\begin{example}
Suppose $a+b=4$ and $m=5$. The non-dominated left-to-right maxima form an increasing permutation of size 7. Beneath these, the separating elements may be arranged as any permutation from $\Av_6(\iota_3,\delta_4)$. In Figure \ref{pegperm} they form the pattern 563412. This gives the peg permutation $7^+ 5^{\Cdot} 8^+ 6^{\Cdot} 9^+ 3^{\Cdot} 10^+ 4^{\Cdot} 11^+ 1^{\Cdot} 12^+ 2^{\Cdot} 13^+$.

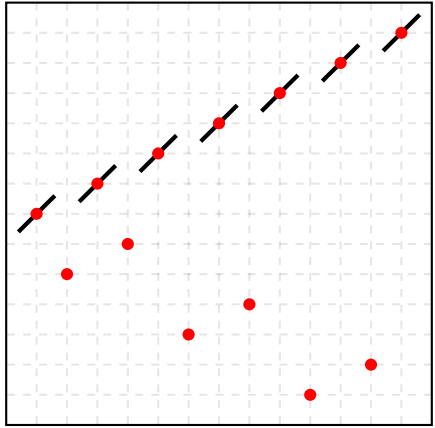
\begin{figure}[h]
\centering
\begin{tikzpicture}[scale=0.8]

\draw[thick] (0,0) rectangle (7,7);
\draw[thick,dashed,opacity = 0.1](7,0.5)--(0,0.5);
\draw[thick,dashed,opacity = 0.1](7,1)--(0,1);
\draw[thick,dashed,opacity = 0.1](7,1.5)--(0,1.5);
\draw[thick,dashed,opacity = 0.1](7,2)--(0,2);
\draw[thick,dashed,opacity = 0.1](7,2.5)--(0,2.5);
\draw[thick,dashed,opacity = 0.1](7,3)--(0,3);
\draw[thick,dashed,opacity = 0.1](7,3.5)--(0,3.5);
\draw[thick,dashed,opacity = 0.1](7,4)--(0,4);
\draw[thick,dashed,opacity = 0.1](7,4.5)--(0,4.5);
\draw[thick,dashed,opacity = 0.1](7,5)--(0,5);
\draw[thick,dashed,opacity = 0.1](7,5.5)--(0,5.5);
\draw[thick,dashed,opacity = 0.1](7,6)--(0,6);
\draw[thick,dashed,opacity = 0.1](7,6.5)--(0,6.5);
\draw[thick,dashed,opacity = 0.1](0.5,7)--(0.5,0);
\draw[thick,dashed,opacity = 0.1](1,7)--(1,0);
\draw[thick,dashed,opacity = 0.1](1.5,7)--(1.5,0);
\draw[thick,dashed,opacity = 0.1](2,7)--(2,0);
\draw[thick,dashed,opacity = 0.1](2.5,7)--(2.5,0);
\draw[thick,dashed,opacity = 0.1](3,7)--(3,0);
\draw[thick,dashed,opacity = 0.1](3.5,7)--(3.5,0);
\draw[thick,dashed,opacity = 0.1](4,7)--(4,0);
\draw[thick,dashed,opacity = 0.1](4.5,7)--(4.5,0);
\draw[thick,dashed,opacity = 0.1](5,7)--(5,0);
\draw[thick,dashed,opacity = 0.1](5.5,7)--(5.5,0);
\draw[thick,dashed,opacity = 0.1](6,7)--(6,0);
\draw[thick,dashed,opacity = 0.1](6.5,7)--(6.5,0);

\draw[ultra thick,black] (0.2,3.2) -- (0.8,3.8);
\fill[red] (0.5,3.5) circle (0.1) ;
\fill[red] (1,2.5) circle (0.1);
\draw[ultra thick,black] (1.2,3.7) -- (1.8,4.3);
\fill[red] (1.5,4) circle (0.1) ;
\fill[red] (2,3) circle (0.1) ;

\draw[ultra thick,black] (2.2,4.2) -- (2.8,4.8);
\fill[red] (2.5,4.5) circle (0.1);
\fill[red] (3,1.5) circle (0.1);
\fill[red] (4,2) circle (0.1);

\draw[ultra thick,black] (3.2,4.7) -- (3.8,5.3);
\fill[red] (3.5,5) circle (0.1);

\draw[ultra thick,black] (4.2,5.2) -- (4.8,5.8);
\fill[red] (4.5,5.5) circle (0.1);
\draw[ultra thick,black] (5.2,5.7) -- (5.8,6.3);
\fill[red] (5.5,6) circle (0.1);

\draw[ultra thick,black] (6.2,6.2) -- (6.8,6.8);
\fill[red] (6.5,6.5) circle (0.1);

\fill[red] (5,0.5) circle (0.1);
\fill[red] (6,1) circle (0.1);

\end{tikzpicture}
\caption[below]{A compact peg permutation for  $\Av(\Pi(4,0), \delta_5)$ that contains the maximum number of non-dominated left-to-right maxima, shown with an upward slanting line through them. }\label{pegperm}
\end{figure}

\end{example}

Next we will show that the leading coefficient of the polynomial that counts $\#\Av_n(\Pi(a,b),\delta_3)$ is a Catalan number. In order to do so, we first prove an auxiliary lemma that we were unable to find in the literature on Catalan numbers.

\begin{lemma}\label{Countinglemma}
The number of sequences of non-negative integers $a_1a_2\ldots a_{k-1}a_k$ such that \[\sum_{i\leqslant  j}a_i < j\]  for all $j\leqslant  k$ is given by $C_k$, the $k^{th}$ Catalan number. 
\end{lemma}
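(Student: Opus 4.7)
The plan is to reduce the count to a cycle lemma computation. First I would extend each valid sequence $(a_1,\ldots,a_k)$ to length $k+1$ by setting $a_{k+1} := k - (a_1+\cdots+a_k)$; the partial sum bound at $j=k$ guarantees $a_{k+1}\geqslant 1$. The extended sequence $(a_1,\ldots,a_{k+1})$ is a composition of $k$ into $k+1$ non-negative parts whose partial sums satisfy $\sum_{i\leqslant j}a_i < j$ for every $j\leqslant k+1$ (the inequality at $j=k+1$ being automatic, since $s_{k+1}=k$). Conversely, any such composition has $a_{k+1}=k-s_k\geqslant 1$ and its truncation is a valid length-$k$ sequence. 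So it suffices to count compositions of $k$ into $k+1$ non-negative parts whose partial sums satisfy $s_j<j$ for all $j$.

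Next, I would invoke the cycle lemma (Dvoretzky--Motzkin). Setting $x_i := 1-a_i$ produces integers with $x_i\leqslant 1$ and $\sum_{i=1}^{k+1} x_i = (k+1)-k = 1$. The cycle lemma then asserts that exactly one of the $k+1$ cyclic rotations of $(x_1,\ldots,x_{k+1})$ has all partial sums strictly positive, which translates precisely to $s_j<j$ holding for every $j\leqslant k+1$. Moreover every cyclic orbit on these compositions has full size $k+1$: if a composition had period $p \mid k+1$ with $p<k+1$, then the sum would equal $\tfrac{k+1}{p}\sum_{i\leqslant p}a_i = k$, forcing $\tfrac{k+1}{p} \mid k$; since $\tfrac{k+1}{p}$ also divides $k+1$, we would get $\tfrac{k+1}{p} \mid \gcd(k,k+1)=1$, a contradiction.

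Combining these points, each orbit of size $k+1$ contributes exactly one valid sequence, so the count is
\[
\frac{1}{k+1}\binom{2k}{k} = C_k,
\]
where $\binom{2k}{k}$ enumerates compositions of $k$ into $k+1$ non-negative parts by stars and bars. The only spot needing a moment of care is the freeness of the cyclic action, which is handled above via the coprimality of $k$ and $k+1$; the remainder is a direct application of the cycle lemma.
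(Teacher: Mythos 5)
Your proof is correct, but it takes a genuinely different route from the paper's. The paper gives a one-line bijection: sending $a_j$ to $b_j = 1+\sum_{i\leqslant j}a_i$ identifies the sequences in question with the weakly increasing sequences $1\leqslant b_1\leqslant\cdots\leqslant b_k$ with $b_j\leqslant j$, which are a catalogued Catalan family (Stanley, EC2, Exercise~6.19(s)); the argument is very short but leans on that external reference. You instead append the slack term $a_{k+1}=k-s_k$ to pass to compositions of $k$ into $k+1$ non-negative parts, apply the Dvoretzky--Motzkin cycle lemma to $x_i=1-a_i$ (which has $x_i\leqslant 1$ and total sum $1$, so exactly one cyclic shift has all partial sums positive, and positivity of $\sum_{i\leqslant j}x_i = j-s_j$ is exactly the condition $s_j<j$), and then divide the stars-and-bars count $\binom{2k}{k}$ by the orbit size $k+1$. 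The one delicate point --- that every cyclic orbit is free, so that "one good shift per sequence" really means "one good composition per orbit of size $k+1$" --- you handle correctly via $\gcd(k,k+1)=1$. The trade-off: your argument is self-contained modulo the (standard) cycle lemma and produces the closed form $\frac{1}{k+1}\binom{2k}{k}$ directly, at the cost of being longer; the paper's bijection is shorter but only reduces the problem to another known Catalan object. Both are valid proofs of the lemma.
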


\begin{example}
There are $C_3=5$ such sequences of length 3: 000, 001, 010, 011, and 002.
\end{example}

\begin{proof}
According to \cite[Exercise~6.19(s)]{stanleyEC2}, the Catalan numbers count sequences $b_1b_2\ldots b_k$ such that $1\leqslant b_1\leqslant \cdots \leqslant b_k$, and $b_j\leqslant j$. We will refer to these as $b$-sequences, and those described in the statement of the lemma as $a$-sequences. Given an $a$-sequence, let $S_j$ denote the partial sum $\sum_{i=1}^j a_i$, for each $j\in [k]$. 
We claim the map $a_j \mapsto 1+S_j$ is a bijection between the $a$-sequences and the $b$-sequences. We have $1+S_j\leqslant j$ (since $S_j < j$), and $1+S_j \leqslant 1+S_{j+1}$, so the image of an $a$-sequence is a $b$-sequence.  Given a $b$-sequence $b_1b_2\ldots b_k$, the inverse map is given by $b_1 \mapsto 0$ and $b_j \mapsto b_j-b_{j-1}$ for $1<j\leqslant k$; note that this sends $b$-sequences to $a$-sequences since $b_j-b_{j-1}\geqslant 0$ and $\sum_{i\leqslant j} (b_i-b_{i-1}) = b_j-b_1 < j$. 
\end{proof}

\begin{theorem}
For integers $a,b$ such that $a\geqslant 1$, $b\geqslant 0$ and $a+b\geqslant 3$, and for sufficiently large $n$, the polynomial $p(n)$ that eventually counts $\#\Av_n(\Pi(a,b), \delta_3)$ has leading term $C_{a+b-2}{n\choose a+b-2}$.  
\end{theorem}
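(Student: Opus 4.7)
The plan is to apply Theorem~\ref{avabd} to reduce to the case $b = 0$, and then analyze the compact peg permutations of maximum degree for the class $\Av(\Pi(N, 0), \delta_3)$, where $N = a + b$. By the proof of Theorem~\ref{polyord}, each such max-degree peg has exactly $N - 1$ non-dominated left-to-right maxima (each marked $+$) and some fixed $\Cdot$-dots. A peg with $N - 1$ $+$-slots and $d$ dots contributes $\binom{n - d - 1}{N - 2}$ to the counting polynomial, with leading term $\tfrac{n^{N - 2}}{(N - 2)!}$, independent of $d$. Summing over max-degree pegs (with lower-order overlap handled by smaller pegs), it therefore suffices to show that the number of max-degree compact pegs equals $C_{N - 2}$.

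I would establish the structure of such a peg as follows. Let $H_1 < H_2 < \cdots < H_{N - 1}$ denote the non-dominated left-to-right maxima, necessarily in the same order by position as by value, and note that all other peg elements (the ``low'' elements) have value below $H_1$. Examining the $\Pi(N, 0)$-avoidance constraint at $v = H_1$---no $N - 1$ smaller values in increasing order with at least one element to the right of $H_1$---yields: (i) exactly one non-left-to-right maximum $S_i$ sits between each consecutive pair $H_i, H_{i + 1}$, giving $N - 2$ separators, (ii) no non-left-to-right maximum lies before $H_1$ or after $H_{N - 1}$, and (iii) any dominated left-to-right maxima $D_1 < \cdots < D_k$ appear at the initial positions $1, \ldots, k$, with each $D_i < S_{N - 2}$. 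I would then parameterize by $a_j := \#\{i : S_{j - 1} \leq D_i < S_j\}$ for $j = 1, \ldots, N - 2$ (with $S_0 := 0$). The constraint at $v = H_1$ applied to the increasing subsequence formed by the $D$'s of value below $S_j$ followed by $S_j, S_{j + 1}, \ldots, S_{N - 2}$ translates precisely to $\sum_{i \leq j} a_i < j$ for each $j$. By Lemma~\ref{Countinglemma}, the number of such sequences is $C_{N - 2}$, so there are exactly $C_{N - 2}$ max-degree pegs. The avoidance constraints at the remaining peg values (the larger $H_i$'s, the $D$'s, and the $S$'s) are implied by the one at $H_1$, which is the tightest because $H_1$ sits above the greatest number of low elements.

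The main obstacle will be the careful structural argument above: one must rigorously rule out alternative peg configurations, such as non-left-to-right maxima before $H_1$, low elements after $H_{N - 1}$, or dominated left-to-right maxima interleaved with non-left-to-right maxima between the $H_i$'s. Each such alternative must be shown to either produce a pattern from $\Pi(N, 0)$ via an extended increasing subsequence of low elements, or reduce the count of $+$-slots below $N - 1$. Once the structural claim is in hand, the bijection with Lemma~\ref{Countinglemma}'s Catalan-counted sequences delivers the stated leading term.
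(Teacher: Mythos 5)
Your proposal follows essentially the same route as the paper's own proof: reduce to $\Av(\Pi(a+b,0),\delta_3)$ via Theorem~\ref{avabd}, observe that the leading coefficient is the number of maximum-degree compact peg permutations, pin down their structure (increasing separators between consecutive non-dominated left-to-right maxima, with the only freedom being the placement of dominated left-to-right maxima subject to avoiding $\iota_{a+b-1}$), and encode these placements as the sequences of Lemma~\ref{Countinglemma} to obtain $C_{a+b-2}$. Your parameterization $a_j=\#\{i : S_{j-1}\leqslant D_i<S_j\}$ is exactly the paper's ``at most $j-1$ dominated maxima southwest of the $j$th separating element'' condition, so the two arguments coincide.
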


\begin{proof} 
We will prove that the theorem holds for  $\Av(\Pi(a+b,0), \delta_3)$, after which the general case follows from Theorem \ref{avabd}. Let $p(n)$ denote the polynomial that counts permutations in $\Av_n(\Pi(a+b,0), \delta_3)$ for sufficiently large $n$. 
By Theorem \ref{polyord} the degree of $p(n)$ is $a+b-2$, meaning that if $p(n)$ is expressed in the Newton basis for polynomials, $\left\{{n\choose k}\right\}_{k\geqslant 0}$, then $p(n)$ has leading term $c{n\choose a+b-2}$ for some $c$.

It remains to determine the coefficient $c$, and in order to do so, we count the compact peg permutations constructed as in Theorem \ref{polyord}, that contain $(a+b-2)(m-2)+1 = a+b-1$ non-dominated left-to-right maxima. Any compact peg permutation with fewer non-dominated left-to-right maxima will permit fewer arbitrary inflations, and will not contribute to the maximal degree term of $p(n)$. Let $\tilde{\pi}$ be some compact peg permutation for the class $\Av(\Pi(a,b), \delta_3)$, with $a+b-1$ non-dominated left-to-right maxima. Between each pair of non-dominated left-to-right maxima is a separating element, and since $\tilde{\pi}\in \Av(\delta_3)$, these $a+b-2$ separating elements must be increasing. Therefore, except for the values and positions of any \textit{dominated} left-to-right maxima, the structure of $\tilde{\pi}$ is fixed. Now we consider the number of ways we can select the dominated left-to-right maxima to construct $\tilde{\pi}$. 
 
The separating elements form the pattern $\iota_{a+b-2}$, and we cannot place a dominated left-to-right maxima in any position that creates an $\iota_{a+b-1}$. This means none may go southwest of the least separating element, at most one southwest of the next least separating element, at most two southwest of the next least separating element, and so on. We may place up to $a+b-3$ dominated left-to-right maxima southwest of the greatest separating element. These dominated left-to-right maxima must form an increasing subsequence, since they are all northwest of at least one separating element, and elements of the class must avoid $\delta_3$. 
 
Counting the total number of ways we can place elements in this manner amounts to enumerating the sequences \[\left\{a_1a_2\ldots a_{a+b-2}:0\leqslant  a_i \text{ and }\sum_{i\leqslant  j} a_i< j\right\}.\]

By Lemma \ref{Countinglemma}, this is the Catalan number $C_{a+b-2}$, meaning that the leading term of $p(n)$ is $C_{a+b-2}{n\choose a+b-2}$, as desired.\end{proof}

Automatic enumeration schemes (see \cite{HV16}) exist for polynomial classes such as these, so there is little to gain from further determining the exact coefficients of the polynomial that counts $\Av_n(\Pi(a,b), \delta_3)$. However, experimental data suggests the following form:
\begin{conjecture}
For $a+b\geqslant 3$ and $n\geqslant 2(a+b-2)+1$, the terms of the polynomial counting $\#\Av_n(\Pi(a,b), \delta_3)$ are given by \[C_{a+b-2}{n \choose a+b-2}- \sum_{1\leqslant  h< n-2} T_{a+b-2,h} {n \choose a+b-3-h},
\]
where the coefficients $T_{a+b-2,h}$ correspond to rows of the transposed Catalan triangle; \[T_{p,q} = \frac{q{2p-q \choose p}}{2p-q}\] (OEIS sequence A033184 \cite{oeis}).
\end{conjecture}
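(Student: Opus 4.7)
The plan is to refine the peg-permutation enumeration used for the leading-coefficient theorem into an identity valid in each degree of the Newton basis $\{\binom{n}{k}\}_{k\geqslant 0}$. By Theorem~\ref{avabd} it suffices to prove the formula for $\Av_n(\Pi(a+b,0),\delta_3)$. Every sufficiently large permutation in this class is an inflation of a unique compact peg permutation $\tilde{\pi}$, so if $N_{j,s}$ denotes the number of compact peg permutations in $\Av(\Pi(a+b,0),\delta_3)$ with $j+1$ non-dominated left-to-right maxima (marked $+$) and $s$ dotted positions (separating elements together with dominated left-to-right maxima), then for $n$ large
\[
p(n) \;=\; \sum_{j=0}^{a+b-2}\;\sum_{s}\, N_{j,s}\binom{n-s-1}{j}.
\]

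First, I would determine $N_{j,s}$ combinatorially. As in the proof of Theorem~\ref{polyord}, $\delta_3$-avoidance forces the $j$ separating elements to form $\iota_j$, while $\iota_{a+b-1}$-avoidance and $\delta_3$-avoidance restrict the dominated left-to-right maxima to lie in an $\iota_j$-staircase of slots of heights $0,1,2,\dots,j-1$, with at most $j-i$ maxima in slot $i$, and to form an increasing chain themselves. This is precisely the setting of Lemma~\ref{Countinglemma}, and for $j=a+b-2$ the total count is $C_{a+b-2}$. For general $j$, one obtains a ballot-type enumeration recording the number of dominated maxima in each slot. Second, I would expand $\binom{n-s-1}{j}$ in the Newton basis via Vandermonde:
\[
\binom{n-s-1}{j} \;=\; \sum_{i=0}^{j} (-1)^{j-i}\binom{s+j-i}{j-i}\binom{n}{i},
\]
which yields
\[
p(n) \;=\; \sum_{i=0}^{a+b-2}\binom{n}{i}\sum_{j\geqslant i}(-1)^{j-i}\sum_{s}N_{j,s}\binom{s+j-i}{j-i}.
\]
The coefficient of $\binom{n}{a+b-2}$ collapses to $\sum_{s}N_{a+b-2,s}=C_{a+b-2}$ as already known.

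The main obstacle is the combinatorial identity expressing the remaining inner sums as $-T_{a+b-2,\,a+b-3-i}$. Since $T_{p,q}=\frac{q}{2p-q}\binom{2p-q}{p}$ counts lattice paths in a ballot-type region, the cleanest route is likely to encode the dominated-maxima configurations for varying $j$ as partial lattice paths and to interpret the alternating sum over $j$ as an inclusion-exclusion that truncates these paths to paths of a fixed total length $2(a+b-2)-h$, where $h=a+b-3-i$. An alternative is a generating-function attack: assembling all compact peg permutations into a bivariate series $F(x,y)=\sum N_{j,s}x^s y^j$ and applying the kernel method or Lagrange inversion to $F$, exploiting that the staircase restriction on dominated maxima produces a Catalan-type functional equation. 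Either way, the technical heart of the proof is matching the ballot structure of the compact peg permutations to the transposed Catalan triangle; once this match is established, substitution back into the Newton-basis expansion yields the conjectured closed form.
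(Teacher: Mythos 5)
The statement you are addressing is presented in the paper only as a conjecture supported by experimental data; the authors give no proof, so there is no argument of theirs to compare yours against. Judged on its own terms, your proposal is a sensible plan of attack but it is not a proof. The decisive step --- showing that for each $i<a+b-2$ the alternating inner sum $\sum_{j\geqslant i}(-1)^{j-i}\sum_{s}N_{j,s}\binom{s+j-i}{j-i}$ equals $-T_{a+b-2,\,a+b-3-i}$ --- is essentially the whole content of the conjecture once the peg-permutation bookkeeping is in place, and you explicitly leave it open, offering only two speculative routes (``the cleanest route is likely\ldots'', ``an alternative is\ldots''). Until one of these is actually carried out, all your argument establishes is the leading term, which is already the paper's preceding (proved) theorem.

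There are also two gaps in the setup that must be closed before the reduction itself is valid. First, the identity $p(n)=\sum_{j,s}N_{j,s}\binom{n-s-1}{j}$ presupposes that every sufficiently long permutation in the class arises from exactly one compact peg permutation and one choice of inflation. Grid classes of distinct compact peg permutations can overlap, so you must either pass to the canonical ``filling'' representation of Homberger and Vatter (which forces each $+$-marked slot to absorb enough elements that the filling does not degenerate to a smaller peg permutation, changing $\binom{n-s-1}{j}$ to a shifted binomial) or perform an explicit inclusion--exclusion over peg permutations; as written, permutations are double-counted. Second, you assert without argument that the only compact peg permutations contributing to lower-degree coefficients are the ``staircase'' ones with $j+1\leqslant a+b-1$ non-dominated left-to-right maxima and dominated maxima confined to the slots of Lemma~\ref{Countinglemma}; the proof of Theorem~\ref{polyord} only bounds the number of $+$-marked entries and says nothing about classifying all compact peg permutations of the class, so peg permutations of other shapes could contribute to the coefficients of $\binom{n}{i}$ for $i<a+b-2$ and must be enumerated as well. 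Neither issue is fatal to the strategy, but together with the unproven central identity they leave the conjecture where the paper left it: open.
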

\begin{example}
For $n\geqslant 13$, \[\#\Av_n(\Pi(9,0), \delta_3) = 429 {n\choose 7} - 132 {n\choose 5}-132 {n\choose 4} - 90{n \choose 3}-48 {n\choose 2} - 20 {n\choose 1} - 6 {n\choose 0}. \]
\end{example}

\bibliographystyle{abbrv} 
\bibliography{Wilf-eq}
\end{document}